\documentclass[a4paper,10pt]{article}

\usepackage{amsmath,amstext,amsthm,amsfonts,xspace}

\usepackage[%
   pdfpagemode=UseNone,
   bookmarks=true,
   colorlinks,
   linkcolor=blue,
   anchorcolor=blue,
   citecolor=blue,
   filecolor=blue,
   pagecolor=blue,
   urlcolor=blue
   ]{hyperref}

\usepackage{amssymb}
\usepackage{comment}
\usepackage{graphicx}
\usepackage{psfrag}
\usepackage{verbatim}
\usepackage{fullpage}

\theoremstyle{plain}
   \newtheorem{theorem}{Theorem}
   \newtheorem{corollary}{Corollary}
   \newtheorem{lemma}{Lemma}
   
\theoremstyle{definition}
   \newtheorem{definition}{Definition}
   
   \newtheorem{assumptions}{Assumption}
   \newtheorem{remark}{Remark}
\theoremstyle{remark}

\numberwithin{theorem}{section}
\numberwithin{equation}{section}

\newcommand{\note}[1]%
{\noindent\centerline{\fbox{\parbox{.9\textwidth}{\textbf{#1}}}}}
\newcommand{\snote}[1]%
{\fbox{\textbf{#1}}}

\newcommand{\step}[1]{\noindent\raisebox{1.5pt}[10pt][0pt]{\tiny\framebox{$#1$}}\xspace}

\newcommand{\definedas}{:=}

\newcommand{\D}{\displaystyle}	

\newcommand{\normaa}[1]{\left\| {#1} \right\|_a}
\newcommand{\normab}[1]{\left\| {#1} \right\|_b}
\newcommand{\norm}[1]{\left\| {#1} \right\|_{\Omega}}
\newcommand{\normT}[1]{\left\| {#1} \right\|_{T}}
\newcommand{\normNT}[1]{\left\| {#1} \right\|_{\omega_\Tau(T)}}
\newcommand{\normNS}[1]{\left\| {#1} \right\|_{\omega_\Tau(S)}}
\newcommand{\normS}[1]{\left\| {#1} \right\|_{S}}
\newcommand{\normbT}[1]{\left\| {#1} \right\|_{\partial T}}
\newcommand{\normkNT}[1]{\left\| {#1} \right\|_{\omega_k(T)}}

\DeclareMathOperator{\dist}{dist}
\DeclareMathOperator{\Span}{span}
\DeclareMathOperator{\diam}{diam}

\newcommand{\est}[2]{\eta_\Tau(#1;#2)}
\newcommand{\gest}[1]{\eta_\Tau(#1)}

\newcommand\NN{\mathbb N}        

\newcommand\AAA{\mathcal A}
\newcommand\BB{\mathcal B}
\newcommand\MM{\mathcal M}
\newcommand\Tau{\mathcal T}
\newcommand\PP{\mathcal P}
\newcommand\VV{\mathbb V}
\newcommand\WW{\mathbb W}
\newcommand\OO{\mathcal O}
\newcommand\RRR{\mathcal R}
\newcommand\SSS{\mathcal S}

\newcommand{\RR}{\mathbb R}         


\newcommand{\NT}{{\mathcal N}_\Tau}
\newcommand{\NTk}{{\mathcal N}_{\Tau_k}}
\newcommand{\Nk}{{\mathcal N}_k}

\title{Convergence of adaptive finite element methods \\for eigenvalue problems}
\author{%
Eduardo M.\ Garau$^*$ 
\and 
Pedro Morin\thanks{Partially supported by CONICET (Argentina) through Grant PIP 5478, and Universidad Nacional del Litoral through Grant CAI+D 12/H421} 
\and 
Carlos Zuppa\thanks{Partially supported by Universidad Nacional de San Luis through Grant 22/F730-FCFMyN.}}

\date{}

\begin{document}

\maketitle

\begin{abstract}
In this article we prove convergence of adaptive finite element methods for second order elliptic eigenvalue problems. We consider Lagrange finite elements of any degree and prove convergence for simple as well as multiple eigenvalues under a minimal refinement of marked elements, for all \emph{reasonable} marking strategies, and starting from any initial triangulation.
\end{abstract}

\section{Introduction and main result}\label{S:intro}

In many practical applications it is of interest to find or approximate the eigenvalues and eigenfunctions of elliptic problems. Finite element approximations for these problems have been widely used and analyzed under a general framework. Optimal a priori error estimates for the eigenvalues and eigenfunctions have been obtained (see~\cite{Babuska_eigenvalue, Babuska, Raviart-Thomas, Strang-Fix} and the references therein).

Adaptive finite element methods are an effective tool for making an efficient use of the computational resources; for certain problems, it is even indispensable to their numerical resolvability.  A quite popular, natural adaptive version of classical finite element methods consists of the loop
\begin{equation*}
%
 \textsc{Solve $\to$ Estimate $\to$ Mark $\to$ Refine},
\end{equation*}
that is: solve for the finite element solution on the current grid, compute the a~posteriori error estimator, mark with its help elements to be subdivided, and refine the current grid into a new, finer one. The ultimate goal of adaptive methods is to equidistribute the error and the computational effort obtaining a sequence of meshes with optimal complexity. Historically, the first step to prove optimality has always been to understand convergence of adaptive methods. A general result of convergence for linear problems has been obtained in~\cite{MSV_convergence}, where very general conditions on the linear problems and the adaptive methods that guarantee convergence are stated. Optimality for adaptive methods using D\"orfler's~\cite{Dorfler} marking strategy has been proved in~\cite{CKNS-quasi-opt, Stevenson} for linear problems.

The goal of this article is to analyze the convergence of adaptive finite element methods for the eigenvalue problem consisting in finding $\lambda \in \RR$, and $u\not\equiv 0$ such that
\[
 -\nabla\cdot(\AAA \nabla u) = \lambda \BB u \quad \text{in $\Omega$},
\qquad 
u=0\quad\text{on $\partial\Omega$},
\]
under general assumptions on $\AAA$, $\BB$ and $\Omega$ that we state precisely in Section~\ref{S:setting}.

As we mentioned before, adaptive methods are based on a posteriori error estimators, that are computable quantities depending on the discrete solution and data, and indicate a distribution of the error. A posteriori error estimators for eigenvalue problems have been constructed by using different approaches in~\cite{Verfurth-book, Verfurth-nonlinear, Duran, Larson}, they have been developed for $\AAA \equiv I$ and $\BB \equiv 1$, but the same proofs can be carried over to the general case considered here; see~\cite{Giani} and Section~\ref{S:apost}. An important aspect to be mentioned here is that the upper bound holds for sufficiently fine meshes. However, our proof will not rely on this bound, allowing us to prove convergence from any initial mesh. The first result (and only up to now) about convergence of adaptive finite elements for eigenvalue problems has been presented in~\cite{Giani}.

The following is the main result of this article.

\begin{theorem}[Main Result] Let $\lambda_k$ and $u_k$ be the discrete eigenvalues and eigenfunctions obtained with the adaptive algorithm stated in Section~\ref{S:adloop} below. Then there exists an eigenvalue $\lambda$ of the continuous problem such that
\[
 \lim_{k\rightarrow\infty} \lambda_k=\lambda \qquad \textrm{and}\qquad \lim_{k\rightarrow\infty} \dist_{H^1_0(\Omega)}(u_k,M(\lambda)) = 0,
\]
where $M(\lambda)$ denotes the set of all eigenfunctions of the continuous problem corresponding to the eigenvalue $\lambda$.
\end{theorem}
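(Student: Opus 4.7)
The plan is to adapt the framework of Morin--Siebert--Veeser for linear problems~\cite{MSV_convergence} to the eigenvalue setting. The adaptive loop produces nested conforming spaces $\VV_k\subset\VV_{k+1}\subset H^1_0(\Omega)$, so we may define the limit space $\VV_\infty\definedas\overline{\bigcup_k \VV_k}^{H^1_0}$. The proof proceeds in four stages: (i) show compactness of the discrete eigenpairs along a subsequence; (ii) identify the limit as an eigenpair for the problem posed on $\VV_\infty$; (iii) use the a~posteriori estimator together with the ``reasonable'' marking strategy to upgrade this to an eigenpair of the \emph{continuous} problem; (iv) remove the subsequence extraction.

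For (i)--(ii), I would normalize $u_k$ in the $\BB$-weighted $L^2$ norm, so that $\lambda_k=a(u_k,u_k)$. A Rayleigh--Ritz argument on any fixed $v\in \VV_1$ shows that $\{\lambda_k\}$ is bounded above, hence $\{u_k\}$ is bounded in $H^1_0(\Omega)$. Extract a subsequence (still labelled $k$) with $\lambda_k\to\lambda_\infty$ and $u_k\rightharpoonup u_\infty$ in $H^1_0(\Omega)$, with strong convergence in $L^2(\Omega)$ by Rellich. For any $v\in\VV_j$ the discrete equation $a(u_k,v)=\lambda_k b(u_k,v)$ holds for every $k\geq j$; passing to the limit and using the density of $\bigcup_j\VV_j$ in $\VV_\infty$ yields
\begin{equation*}
a(u_\infty,v)=\lambda_\infty b(u_\infty,v) \qquad \forall\, v\in\VV_\infty.
\end{equation*}
Testing with $v=u_\infty$ and comparing with $a(u_k,u_k)=\lambda_k$ then upgrades the weak convergence $u_k\rightharpoonup u_\infty$ to strong convergence in $H^1_0(\Omega)$ and gives $b(u_\infty,u_\infty)=1$, so $u_\infty\not\equiv 0$.

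Stage (iii) is the main obstacle, and is where the adaptive loop enters. We need $a(u_\infty,\phi)=\lambda_\infty b(u_\infty,\phi)$ for every $\phi\in H^1_0(\Omega)$, not just $\phi\in\VV_\infty$. Following the MSV philosophy I would partition $\Omega$ into the set $\Omega^0$ of elements that are ultimately never refined after some step, and its complement $\Omega^+$, covered by elements that are refined infinitely often. On $\Omega^+$ any $\phi\in H^1_0(\Omega)$ can be approximated by quasi-interpolants $\phi_k\in\VV_k$ with vanishing $H^1$-error, while on $\Omega^0$ the reasonable marking strategy forces the maximum of the estimator over the marked elements, hence over $\Omega^+$, to zero. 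Localizing the a~posteriori upper bound of Section~\ref{S:apost} to elements in $\Omega^+$ then shows that the residual of $(\lambda_\infty,u_\infty)$ tested against $\phi-\phi_k$ supported (modulo patches) in $\Omega^+$ is controlled by a vanishing quantity. The delicate point, absent in the source-problem case of~\cite{MSV_convergence}, is that the ``right-hand side'' $\lambda_k \BB u_k$ varies with $k$: closing the limit requires the strong $L^2$-convergence $u_k\to u_\infty$ and the convergence $\lambda_k\to\lambda_\infty$ already established, applied inside the element residuals.

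For stage (iv), once $(\lambda_\infty,u_\infty)$ is an eigenpair of the continuous problem, set $\lambda=\lambda_\infty$, so $u_\infty\in M(\lambda)$. A standard subsequence-of-a-subsequence argument removes the extraction: if $\dist_{H^1_0(\Omega)}(u_k,M(\lambda))\not\to 0$ along some subsequence, stages (i)--(iii) applied to that subsequence produce a further sub-subsequence converging in $H^1_0(\Omega)$ to an element of $M(\lambda')$ for some eigenvalue $\lambda'$; the monotonicity of $\lambda_k$ inherited from $\VV_k\subset\VV_{k+1}$ forces $\lambda'=\lambda$, contradicting the assumption. The hardest step will be stage (iii): formulating the localized upper bound tightly enough that the $k$-dependence of the load $\lambda_k\BB u_k$ does not spoil the limit, and verifying that ``reasonable'' marking truly forces the estimator to vanish on the infinitely refined region in this nonlinear setting.
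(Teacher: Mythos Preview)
Your stages (i), (ii), and (iv) are essentially the paper's argument: the monotonicity of $\lambda_k$, weak-then-strong $H^1$ convergence of a subsequence to $u_\infty$ solving the problem on $\VV_\infty$, and the subsequence-of-a-subsequence contradiction at the end all match.

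The gap is in stage (iii). You propose to ``localize the a~posteriori upper bound of Section~\ref{S:apost}'', but that upper bound (Theorem~\ref{T:upperbound}) is only valid when $h_\Tau$ is small enough, which is exactly the hypothesis the paper refuses to impose on $\Tau_0$. More importantly, your sketch never explains why the estimator itself tends to zero on the refined region: you say marking forces the estimator on never-refined elements below the estimator on marked elements, but you still need the latter to vanish, and on refined elements the interior residual $\|R\|_T$ scales like $h_T^{-1}\|\nabla u_k\|_T$ by an inverse inequality, so $\eta_k(T)$ does not go to zero just because $h_T\to 0$. Your two-way split also misses the interface layer between the two regions.

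The paper's route is the reverse of what you outline: it first proves $\eta_k(\Tau_k)\to 0$ using the \emph{discrete local lower bound} (Corollary~\ref{C:lowerbound}), not the upper bound. The triangulation is split three ways (Definition~\ref{D:splitting}); on $\Tau_k^0$ one applies the lower bound with $\WW=\VV_\infty$, $w=u_\infty$ and uses $u_k\to u_\infty$ in $H^1$ together with $\|h_k\chi_{\Omega_k^0}\|_\infty\to 0$; on the interface $\Tau_k^*$ one applies it with $\WW=H^1_0(\Omega)$ and uses $|\Omega_k^*|\to 0$; on $\Tau_k^+$ one finally invokes the marking assumption and a dominated-convergence argument. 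Only after $\eta_k(\Tau_k)\to 0$ is established does the paper bound $|a(u_\infty,v)-\lambda_\infty b(u_\infty,v)|$ directly by $\eta_k(\Tau_k)\|\nabla v\|$ plus terms controlled by $\|u_k-u_\infty\|_{H^1}$ and $|\lambda_k-\lambda_\infty|$ (Theorem~\ref{T:eigenfunction}); this step uses only integration by parts and the Scott--Zhang interpolant, never the reliability bound. The use of the lower bound with the limit $u_\infty$ as comparison function is the missing idea in your proposal.
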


\begin{remark}
 Before proceeding with the details of the statement and the proof of this result, we make some remarks:
\begin{itemize}
\item An important difference with previous works is that we do not require the initial mesh $\Tau_0$ to be fine enough. Any initial mesh that captures the discontinuities of $\AAA$ will guarantee convergence.
\item The result holds for any of the popular marking strategies, not only D\"orfler's~\cite{Dorfler}. The only assumption is that non-marked elements have error estimators smaller than marked ones, see condition~(\ref{E:marking}) in Section~\ref{S:adloop} below.
\item The marking is done according to the residual type a posteriori error estimators presented in Section~\ref{S:apost}. Even though there are some \emph{oscillation terms} in the lower bound, we do not require any marking due to these terms. We only need to mark according to the error estimators, which is what is usually done in practice.
\item The result holds with a minimal refinement of marked elements, one bisection suffices. We do not require the enforcement of the so-called \emph{interior node property}.
\item We are assuming that each of the discrete eigenvalues $\lambda_k$ is the $j$-th eigenvalue of the corresponding discrete problem. The result, as stated above, only guarantees that $\lambda_k$ converges to one eigenvalue $\lambda$ of the continuous problem. We can be sure that we approximate the $j$-th eigenvalue of the continuous problem under any of the following assumptions:
  \begin{itemize}
  \item No eigenfunction is equal to a polynomial of degree $\le \ell$ on an open region of $\Omega$, where $\ell$ denotes the polynomial degree of the finite element functions being used. This is a Non-Degeneracy Assumption, and it holds for a large class of problems; see Assumption~\ref{A:non-deg} and following discussion.
  \item The meshsize of the initial triangulation is small enough. This assumption goes against the spirit of adaptivity and a posteriori analysis, since we cannot quantify what \emph{small enough} means. But we state it for completeness, because in some (nonlinear) problems there may be no way to overcome this.
  \end{itemize}
\item The proof follows similar ideas to those of~\cite{MSV_convergence}, with some modifications due to the different nature of the problem. It consists in proving the following steps:
  \begin{itemize}
  \item The full sequence of discrete eigenvalues converges to a number $\lambda_\infty$ and a subsequence of the discrete eigenfunctions converges to some function $u_\infty$.
  \item The global a posteriori error estimator converges to zero (for the subsequence).
  \item The pair $(\lambda_\infty, u_\infty)$ is an eigenpair of the continuous problem. Due to a lack of a sharp upper bound (it only holds for sufficiently fine meshes) it is necessary to introduce a new argument to prove this (see Theorem~\ref{T:eigenfunction}). This new argument is perhaps the main difference with respect to~\cite{MSV_convergence}, and we believe that the idea can be useful for many other nonlinear problems.
  \end{itemize}
\end{itemize}

\end{remark}

The rest of the article is organized as follows. In Section~\ref{S:problem} we state precisely the problem that we study, describe the approximants, mention some already known results about a priori and a posteriori estimation, and state the adaptive loop. In Section~\ref{S:uinfty} we prove that the sequence $\{(\lambda_k,u_k)\}_{k\in\NN_0}$ of solutions to the discrete problems contains a subsequence that converges to a limiting pair $(\lambda_\infty,u_\infty)$. In Section~\ref{S:convest} we prove that the global a posteriori error estimator tends to zero; which is instrumental to conclude in Section~\ref{S:eigenfunction} that $(\lambda_\infty,u_\infty)$ is an eigenpair of the continuous problem. Finally, in Section~\ref{S:main-result} we state and prove the main result and discuss its implications.

\section{Problem statement and adaptive algorithm}\label{S:problem}

This section is subdivided in four parts. In Section~\ref{S:setting} we state precisely the continuous problem that we study and mention some of its properties. In Section~\ref{S:discrete problem} we state the discrete problems that we consider as approximants to the continuous one, mention some of its properties and state the a priori error estimates. In Section~\ref{S:apost} we define the a posteriori error estimators that we use, state the upper bound and prove the discrete local lower bound that we will use in our convergence proof. Finally, in Section~\ref{S:adloop} we state the adaptive algorithm together with the assumptions on each of its blocks.

\subsection{Setting}\label{S:setting}
Let $\Omega\subset \RR^d$ be a bounded open set with a Lipschitz boundary. In particular, we suppose that $\Omega$ is a polygonal domain if $d=2$ and a polyhedral domain if $d=3$. Let $a,b: H^1_0(\Omega) \times H^1_0(\Omega) \to \RR$ be the bilinear forms defined by
\begin{equation*}
a(u,v)\definedas\int_\Omega \AAA \nabla u\cdot\nabla v,
\end{equation*}
and
\begin{equation*}
b(u,v)\definedas\int_\Omega \BB u v,
\end{equation*}
where $\AAA$ is a piecewise $W^{1,\infty}(\Omega)$ symmetric-matrix-valued function which is uniformly positive definite, i.e., there exist constants $a_1,a_2>0$ such that
\begin{equation*}
a_1|\xi|^2\leq \AAA(x)\xi\cdot \xi\leq a_2|\xi|^2,\qquad \forall~\xi\in\RR^d,\qquad \forall~x\in\Omega,
\end{equation*}
and $\BB$ is a scalar function such that
$$b_1\leq \BB(x)\leq b_2,\qquad \forall~x\in\Omega,$$
for some constants $b_1,b_2>0$.

We also define the induced norms by these bilinear forms as
\begin{equation*}
\normaa{v}\definedas a(v,v)^{1/2}, \quad v\in H_0^1(\Omega), \qquad\text{and}\qquad
\normab{v}\definedas b(v,v)^{1/2}, \quad v\in L^2(\Omega).
\end{equation*}
By the assumptions on $\AAA$ and $\BB$,  $\normaa{\cdot}\simeq\|\cdot\|_{H^1_0(\Omega)}$ and $\normab{\cdot}\simeq\|\cdot\|_{\Omega}$, i.e., there exist positive constants $c_1$, $c_2$, $c_3$, $c_4$ such that
$$c_1 \|v\|_{H^1_0(\Omega)}\leq \normaa{v}\leq c_2 \|v\|_{H^1_0(\Omega)},\qquad\forall~v\in H^1_0(\Omega),$$
and
$$c_3 \|v\|_{\Omega}\leq \normab{v}\leq c_4 \|v\|_{\Omega},\qquad\forall~v\in L^2(\Omega).$$
Where, hereafter, if $A\subset\Omega$, $\|\cdot\|_A$ will denote the $L^2(A)$-norm.

We consider the following 
\paragraph{Continuous eigenvalue problem.} Find $\lambda\in\RR$ and $u\in H^1_0(\Omega)$ satisfying
\begin{equation} \label{E:cont-problem}
\left\{
\begin{array}{l}
a(u,v)=\lambda \, b(u,v),\qquad \forall~v\in  H^1_0(\Omega),\\
\normab{u}=1.
\end{array}
\right.
\end{equation}

It is well known~\cite{Babuska_eigenvalue} that under our assumptions on $\AAA$ and $\BB$ problem~(\ref{E:cont-problem}) has a countable sequence of eigenvalues 
$$0<\lambda_1\leq \lambda_2\leq \lambda_3\leq \ldots \nearrow \infty$$
and corresponding eigenfunctions
$$u_1,u_2,u_3,\ldots$$
which can be assumed to satisfy
$$b(u_i,u_j)=\delta_{ij}
:=\begin{cases} 1 &i=j,\\ 0 &i\neq j,\end{cases}$$
where in the sequence $\{\lambda_j\}_{j\in\NN}$, the $\lambda_j$ are repeated according to geometric multiplicity.

Also, the eigenvalues can be characterized as extrema of the Rayleigh quotient
$\D\RRR(u)=\frac{a(u,u)}{b(u,u)},$
by the following relationships.
\begin{itemize}
 \item \textbf{Minimum principle:}
\begin{align*}
\lambda_1&=\min\limits_{u\in H^1_0(\Omega)}\RRR(u)=\RRR(u_1), \\
\lambda_j&=\min\limits_{\substack{u\in H^1_0(\Omega)\\a(u,u_i)=0\\i=1,\ldots,j-1}}\RRR(u)=\RRR(u_j),\quad j=2,3,\dots.
\end{align*}

\item \textbf{Minimum-maximum principle:}
\begin{equation*}
\lambda_j=\min\limits_{\substack{V_j\subset H^1_0(\Omega)\\ \dim V_j=j}}\max\limits_{u\in V_j}\RRR(u)=\max\limits_{u\in \Span\{u_1,\ldots,u_j\}}\RRR(u),\quad j=1,2,\ldots.
\end{equation*}
\end{itemize}

For each fixed eigenvalue $\lambda$ of (\ref{E:cont-problem}) we define 
\begin{equation*}
M(\lambda)\definedas\{u\in H^1_0(\Omega):~u~\textrm{satisfies (\ref{E:cont-problem})}\},
\end{equation*}
and notice that if $\lambda$ is simple, then $M(\lambda)$ contains two functions, whereas if $\lambda$ is not simple, it consists of a sphere in the subspace generated by the eigenfunctions.

\subsection{Discrete problem}\label{S:discrete problem}

In order to define the discrete approximations we will consider conforming triangulations $\Tau$ of the domain $\Omega$, that is, partitions of $\Omega$ into $d$-simplices such that if two elements intersect, they do so at a full vertex/edge/face of both elements. For any triangulation $\Tau$, $\SSS$ will denote the set of interior sides, where by side we mean an edge if $d=2$ and a face if $d=3$. And $\kappa_\Tau$ will denote the regularity of $\Tau$, defined as
$$\kappa_\Tau\definedas \max_{T\in\Tau} \frac{\diam(T)}{\rho_T},$$
where $\diam(T)$ is the length of the longest edge of $T$, and  $\rho_T$ is the radius of the largest ball contained in it. It is also useful to define the meshsize $h_\Tau \definedas \D\max_{T\in\Tau} h_T$, where $h_T\definedas|T|^{1/d}$.

Let $\ell\in\NN$ be fixed, and let $\VV_\Tau$ be the finite element space consisting of continuous functions vanishing on $\partial \Omega$ which are polynomials of degree $\le \ell$ in each element of $\Tau$, i.e,
\begin{equation*}
\VV_\Tau\definedas\{v\in H^1_0(\Omega):\quad v|_T\in \PP_\ell(T),\quad \forall~T\in\Tau\}.
\end{equation*}
Obviously, $\VV_\Tau\subset H^1_0(\Omega)$ and if $\Tau_*$ is a refinement of $\Tau$, then $\VV_\Tau\subset\VV_{\Tau_*}$.

We consider the approximation of the continuous eigenvalue problem~(\ref{E:cont-problem}) with the following

\paragraph{Discrete eigenvalue problem.} Find $\lambda_\Tau\in \RR$ and $u_\Tau\in\VV_\Tau$ such that
\begin{equation}\label{E:disc-problem}
\left\{
\begin{array}{l}
a(u_\Tau,v)=\lambda_\Tau \, b(u_\Tau,v),\qquad \forall~v\in  \VV_\Tau,\\
\normab{u_\Tau}=1.
\end{array}
\right.
\end{equation}

For this discrete problem, similar results to those of the continuous problem hold~\cite{Babuska_eigenvalue}. More precisely, problem~(\ref{E:disc-problem}) has a finite sequence of eigenvalues 
$$0<\lambda_{1,\Tau}\leq \lambda_{2,\Tau}\leq \lambda_{3,\Tau}\leq \ldots \leq \lambda_{N_\Tau,\Tau},$$
where $N_\Tau\definedas \dim \VV_\Tau$, and corresponding eigenfunctions
$$u_{1,\Tau},u_{2,\Tau},u_{3,\Tau},\ldots,u_{N_\Tau,\Tau},$$
which can be assumed to satisfy
$$b(u_{i,\Tau},u_{j,\Tau})=\delta_{ij}.$$

Moreover, the following extremal characterizations also hold:
\begin{itemize}
 \item \textbf{Minimum principle:}
\begin{align*}
\lambda_{1,\Tau}&=\min\limits_{u\in \VV_\Tau}\RRR(u)=\RRR(u_{1,\Tau}),\\
\lambda_{j,\Tau}&=\min\limits_{\substack{u\in \VV_\Tau\\a(u,u_{i,\Tau})=0\\i=1,\ldots,j-1}}\RRR(u)=\RRR(u_{j,\Tau}),\quad j=2,3,\ldots,N_\Tau.
\end{align*}

\item \textbf{Minimum-maximum principle:}
\begin{equation*}
\lambda_{j,\Tau}=\min\limits_{\substack{V_{j,\Tau}\subset \VV_\Tau\\ \dim V_{j,\Tau}=j}}\max\limits_{u\in V_{j,\Tau}}\RRR(u)=\max\limits_{u\in \Span\{u_{1,\Tau},\ldots,u_{j,\Tau}\}}\RRR(u),\quad j=1,2,\ldots,N_\Tau.
\end{equation*}
\end{itemize}

It follows from the minimum-maximum principles that 
$$\lambda_j\leq \lambda_{j,\Tau},\qquad j=1,2,\ldots,N_\Tau.$$
and it also follows that if $\Tau_*$ is any refinement of $\Tau$ then
$$\lambda_{j,\Tau_*}\leq \lambda_{j,\Tau},\qquad j=1,2,\ldots,N_\Tau.$$

For a given eigenvalue $\lambda$ we define a notion of minimal error of approximation of its eigenfunctions by
\begin{equation*}
\epsilon_\Tau(\lambda)\definedas\sup_{u\in M(\lambda)} \inf_{\chi\in\VV_\Tau} \normaa{u-\chi}.
\end{equation*}
For $j=1,2,\ldots,N_\Tau$, there holds that
$$\lambda_{j,\Tau}-\lambda_j \lesssim \epsilon_\Tau^2(\lambda_j),$$
where, from now on, whenever we write $A \lesssim B$ we mean that $A \le C B$ with a constant $C$ that may depend on $\AAA$, $\BB$, the domain $\Omega$ and the regularity $\kappa_\Tau$ of $\Tau$, but not on other properties of $\Tau$ such as element size or uniformity.
	
If $\{\Tau_k\}_{k\in\NN_0}$ is any sequence of triangulations such that $\D\sup_{k\in\NN_0} \kappa_{\Tau_k} < \infty$, and $h_{\Tau_k} \to 0$ as $k\to\infty$, then
$$\epsilon_{\Tau_k}(\lambda_j)\longrightarrow 0,\qquad\textrm{as}\quad k \longrightarrow \infty,$$
and therefore,
\begin{equation}\label{E:lambdak converges}
\lambda_{j,\Tau_k}\longrightarrow\lambda_j,\qquad\textrm{as}\quad k\longrightarrow \infty	.
\end{equation}
This holds for any $j\in\NN$ and it is a consequence of standard interpolation estimates and the fact that $M(\lambda_j)$ is bounded and contained in a finite dimensional subspace of $H^1_0(\Omega)$.

\subsection{A posteriori error estimators}\label{S:apost}

 A posteriori estimates for eigenvalue problems have been studied by Larson~\cite{Larson}, Dur\'an, Padra and Rodr\'iguez~\cite{Duran}, Giani and Graham~\cite{Giani}. In this section we present the residual type a posteriori estimates for eigenvalue problems, state without proof some already known properties and prove the discrete local lower bound that will be useful for our convergence proof.

In order to define the estimators we assume that the triangulation $\Tau$ matches the discontinuities of $\AAA$. More precisely, we assume that the discontinuities of $\AAA$ are aligned with the sides of $\Tau$. Observe that in particular, $\AAA|_T$ is Lipschitz continuous for all $T\in\Tau$.

\begin{definition}[Element residual and jump residual]
For $\mu\in\RR$ and $v\in \VV_\Tau$ we define the element residual $R(\mu,v)$ by
\begin{equation}\label{E:element-residual}
R(\mu,v)|_{T}\definedas -\nabla \cdot (\AAA\nabla v)-\mu \BB v,
\end{equation}
for all $T\in\Tau$, and the jump residual $J(v)$ by
\begin{equation}\label{E:jump-residual}
J(v)|_{S}\definedas (\AAA\nabla v)|_{T_1}\cdot \overrightarrow{n_1}+(\AAA\nabla v)|_{T_2}\cdot \overrightarrow{n_2},
\end{equation}
for every interior side $S\in \SSS$, where $T_1$ and $T_2$ are the elements in $\Tau$ which share $S$ and  $\overrightarrow{n_i}$ is the outward normal unit vector of $T_i$ on $S$, for $i=1,2$. We define $J(v)|_{\partial \Omega}\definedas 0$.
\end{definition}

\begin{definition}[Local and global error estimator]
For $\mu\in\RR$ and $v\in \VV_\Tau$ we define the local error estimator $\est{\mu,v}{T}$ by
\begin{equation*}
\est{\mu,v}{T}^2\definedas h_T^2\normT{R(\mu,v)}^2 + h_T\normbT{J(v)}^2,
\end{equation*}
for all $T\in\Tau$, and the global error estimator $\gest{\mu,v}$ is given by
\begin{equation*}
\gest{\mu,v}^2\definedas \sum_{T\in\Tau} \est{\mu,v}{T}^2.
\end{equation*}
\end{definition}

Even though we will not need it for the convergence proof, we include the statement of the upper bound of the error in terms of the a posteriori error estimation, for the sake of completeness.

\begin{theorem}[Upper bound]\label{T:upperbound}
Let $j\in\NN$, and let $u_{\Tau}$ be an eigenfunction corresponding to the $j$-th eigenvalue $\lambda_{\Tau}$ of the discrete problem~\eqref{E:disc-problem}, then, if $h_\Tau$ is small enough, there exists an eigenfunction $u$ corresponding to the $j$-th eigenvalue $\lambda$ of the continuous problem~\eqref{E:cont-problem} such that 
$$\normaa{u-u_\Tau} \lesssim \gest{\lambda_\Tau,u_\Tau}.$$
\end{theorem}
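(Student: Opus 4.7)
The plan is to introduce an auxiliary function $w\in H^1_0(\Omega)$ and reduce the eigenvalue upper bound to a residual-type a posteriori bound for a linear source problem, combined with an a priori comparison term that becomes negligible when $h_\Tau$ is small. Concretely, I would define $w$ as the unique solution of
\[
a(w,v) = \lambda_\Tau\, b(u_\Tau,v), \qquad \forall\, v\in H^1_0(\Omega),
\]
and observe that $u_\Tau$ is precisely the $a$-Galerkin projection of $w$ onto $\VV_\Tau$: by~(\ref{E:disc-problem}), $a(u_\Tau,v)=\lambda_\Tau b(u_\Tau,v)=a(w,v)$ for every $v\in\VV_\Tau$. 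To select the continuous eigenfunction $u$, I would invoke the Babu\v{s}ka--Osborn theory underlying the estimates in Section~\ref{S:discrete problem}: for $h_\Tau$ small enough, the convergence~(\ref{E:lambdak converges}) provides an eigenfunction $u\in M(\lambda)$ corresponding to the same index $j$, with $\normaa{u-u_\Tau}\to 0$ and the quadratic eigenvalue estimate $|\lambda-\lambda_\Tau|\lesssim\normaa{u-u_\Tau}^2$. The triangle inequality $\normaa{u-u_\Tau}\le\normaa{u-w}+\normaa{w-u_\Tau}$ then splits the task into two controllable pieces.

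The term $\normaa{w-u_\Tau}$ is handled by the classical residual-based a posteriori upper bound for elliptic source problems. Since $u_\Tau$ is the Galerkin approximation of $w$ from $\VV_\Tau$, integrating by parts element-by-element and applying a Cl\'ement or Scott--Zhang quasi-interpolation to the error $w-u_\Tau$ yields
\[
\normaa{w-u_\Tau} \lesssim \gest{\lambda_\Tau,u_\Tau},
\]
because the element and jump residuals entering $\est{\lambda_\Tau,u_\Tau}{T}$ coincide with the residuals of the source equation $-\nabla\cdot(\AAA\nabla w)=\lambda_\Tau\BB u_\Tau$. For the term $\normaa{u-w}$, subtracting the continuous eigenvalue equation from the definition of $w$ gives
\[
a(u-w,v) = (\lambda-\lambda_\Tau)\,b(u,v) + \lambda_\Tau\,b(u-u_\Tau,v), \qquad \forall\, v\in H^1_0(\Omega).
\]
Testing with $v=u-w$, bounding $\normab{u-u_\Tau}\lesssim h_\Tau^s\,\normaa{u-u_\Tau}$ by an Aubin--Nitsche duality argument (where $s>0$ comes from the shift regularity of the adjoint source problem on $\Omega$ with coefficient $\AAA$), and using $|\lambda-\lambda_\Tau|\lesssim\normaa{u-u_\Tau}^2$, I would obtain
\[
\normaa{u-w} \lesssim \bigl(h_\Tau^s + \normaa{u-u_\Tau}\bigr)\,\normaa{u-u_\Tau} \asdefined \rho(h_\Tau)\,\normaa{u-u_\Tau},
\]
with $\rho(h_\Tau)\to 0$ as $h_\Tau\to 0$.

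Inserting both bounds into the triangle inequality yields $\normaa{u-u_\Tau}\le C\rho(h_\Tau)\normaa{u-u_\Tau}+C\gest{\lambda_\Tau,u_\Tau}$; taking $h_\Tau$ small enough so that $C\rho(h_\Tau)\le 1/2$ absorbs the first term on the left-hand side and produces the claimed estimate. The main obstacle in this plan is the comparison bound $\normaa{u-w}\lesssim\rho(h_\Tau)\normaa{u-u_\Tau}$: it depends on the $H^{1+s}$ shift regularity of the adjoint problem and on the quadratic a priori convergence of the eigenvalues, which together force the \emph{smallness} hypothesis on $h_\Tau$. This is precisely the reason the authors prefer to avoid invoking this theorem inside their convergence proof for the adaptive loop.
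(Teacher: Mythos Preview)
Your argument is correct and complete, but it follows a different route from the one the paper indicates. The paper proceeds in the style of Dur\'an--Padra--Rodr\'iguez: one works directly with the error $e=u-u_\Tau$, expands $\normaa{e}^2=a(e,e-Ie)+a(e,Ie)$ with a Cl\'ement interpolant $Ie$, and uses the algebraic identity $b(\lambda u-\lambda_\Tau u_\Tau,e)=\tfrac{\lambda+\lambda_\Tau}{2}\normab{e}^2$ (valid because $\normab{u}=\normab{u_\Tau}=1$) to arrive at $\normaa{e}^2\lesssim\gest{\lambda_\Tau,u_\Tau}\,\normaa{e}+(\lambda+\lambda_\Tau)\normab{e}^2$. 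The a~priori bound of Theorem~\ref{T:apriori}, $\normab{e}\lesssim h_\Tau^r\normaa{e}$, then allows the $\normab{e}^2$ term to be absorbed for $h_\Tau$ small. Your approach---introducing the auxiliary source solution $w$ and splitting via $\normaa{u-u_\Tau}\le\normaa{u-w}+\normaa{w-u_\Tau}$---is the alternative used by Larson and by Giani--Graham. It is more modular: the term $\normaa{w-u_\Tau}$ is a black-box application of the linear residual estimator, and all the eigenvalue-specific work is isolated in the comparison $\normaa{u-w}$. The paper's argument is shorter and avoids the auxiliary problem, at the price of the slightly opaque identity for $b(\lambda u-\lambda_\Tau u_\Tau,e)$. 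Both proofs ultimately rely on the same two ingredients (Lemma~\ref{L:regularity} and Theorem~\ref{T:apriori}) to justify the smallness hypothesis on $h_\Tau$, and neither avoids it.
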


The proof of this theorem can be obtained following the steps given in~\cite{Duran}, by extending Lemmas 3.1 and 3.2 presented there for the model problem with $\AAA \equiv I$, and $\BB \equiv 1$, to the general case that we consider here, using the following regularity result, and the a priori bound stated in Theorem~\ref{T:apriori} below.

\begin{lemma}[Regularity of the eigenfunctions]\label{L:regularity}
There exists $r\in (0,1]$ depending only on $\Omega $ and $\AAA$ such that
\begin{equation*}
u\in H^{1+r}(\Omega ),
\end{equation*}%
for any eigenfunction $u$ of the problem~(\ref{E:cont-problem}).
\end{lemma}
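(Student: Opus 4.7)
The plan is to observe that every eigenfunction solves a linear source problem with $L^2$ right-hand side, and then appeal to a known global regularity result for second-order elliptic equations with piecewise smooth coefficients on a Lipschitz domain. No fine spectral information is needed; the argument is purely a regularity statement about the operator $-\nabla\cdot(\mathcal{A}\nabla\cdot)$ with homogeneous Dirichlet data.

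First I would rewrite the eigenvalue equation in source form: any eigenfunction $u \in H^1_0(\Omega)$ satisfies
\[
-\nabla\cdot(\mathcal{A}\nabla u) = f \quad\text{in }\Omega,\qquad u=0 \text{ on }\partial\Omega,
\]
with $f \definedas \lambda\, \mathcal{B}\, u$. Since $\mathcal{B}\in L^\infty(\Omega)$ and $u\in H^1_0(\Omega)\subset L^2(\Omega)$, we have $f\in L^2(\Omega)$, with $\|f\|_\Omega \le |\lambda|\, b_2\, \|u\|_\Omega$, a bound that depends only on $\lambda$, $\mathcal{B}$ and the normalization of $u$.

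Next I would invoke a global elliptic regularity shift for mixed/Dirichlet problems on Lipschitz domains with piecewise Lipschitz coefficients. The precise statement I have in mind is the one of Savar\'e (\emph{Regularity results for elliptic equations in Lipschitz domains}, J. Funct. Anal.\ 152 (1998)), which guarantees the existence of an exponent $r\in(0,1/2]$, depending only on $\Omega$ and on the ellipticity and piecewise-$W^{1,\infty}$ character of $\mathcal{A}$, such that the solution operator $f\mapsto u$ maps $L^2(\Omega)$ boundedly into $H^{1+r}(\Omega)\cap H^1_0(\Omega)$. Applied to our $f = \lambda\mathcal{B}u$ this yields $u\in H^{1+r}(\Omega)$, with a bound $\|u\|_{H^{1+r}(\Omega)} \lesssim |\lambda|\,\|u\|_\Omega$. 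Crucially, $r$ depends only on $\Omega$ and $\mathcal{A}$ (through its discontinuity interfaces and ellipticity constants), not on $\lambda$, $\mathcal{B}$ or $u$, which is exactly the content of the lemma. If one prefers to avoid Lipschitz boundaries entirely and use only the polygonal/polyhedral structure assumed in Section~\ref{S:setting}, one can instead cite Dauge or Grisvard plus the Jochmann-type transmission regularity across the Lipschitz interfaces of $\mathcal{A}$, which produces an analogous $r\in(0,1]$ with the same dependencies.

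The only mild subtlety, and the step I would flag as the main obstacle, is verifying that the regularity shift holds simultaneously across two sources of low regularity: the Lipschitz boundary of $\Omega$ and the interior discontinuities of $\mathcal{A}$. Both have been treated in the literature, but it is essential to use a result that covers them together (rather than applying interior regularity and boundary regularity separately, which is insufficient for coefficients that are merely piecewise $W^{1,\infty}$ up to $\partial\Omega$). Once such a combined result is cited, the proof is immediate from the source-form reformulation above.
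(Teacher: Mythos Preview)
Your approach is correct and essentially identical to the paper's: both reduce to the observation that an eigenfunction solves a second-order elliptic source problem with $L^2$ right-hand side, and then invoke a known regularity shift depending only on $\Omega$ and $\mathcal{A}$. The only difference is the reference chosen---the paper cites Jochmann (and BDLN, Babu\v{s}ka for special cases) rather than Savar\'e, and does not spell out the subtlety about combining boundary and interface singularities that you flag.
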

\begin{proof}
This can be proved by observing that if $u$ is an eigenfunction, then it is also a solution to a linear elliptic equation of second order with right-hand side in $L^2(\Omega)$. We know that $r=1$ when $\AAA$ is constant or smooth and $\Omega$ is convex. The case in which $\Omega$ is non-convex has been studied in~\cite{BDLN} and the case of $\AAA$ having a discontinuity across an interior interface in~\cite{Babuska70}. For the general case, which we are considering here, see~\cite[Theorem 3]{Jochmann}.
\end{proof}

The following result is an a priori estimate relating the errors in the strong and weak norms associated to the problem, and it is the last slab in the chain necessary to prove Theorem~\ref{T:upperbound}. The case $\AAA \equiv I$ and $\BB = 1$ can be easily obtained from the results in~\cite{Strang-Fix}, and in~\cite{Raviart-Thomas}. The general case was presented in~\cite{Giani,Giani-thesis}.

\begin{theorem}\label{T:apriori}
Let the same assumptions of Theorem~\ref{T:upperbound} hold. Then, if $h_\Tau$ is small enough, there exists an eigenfunction $u$ corresponding to the $j$-th eigenvalue $\lambda$ of the continuous problem~\eqref{E:cont-problem} such that 
\begin{equation*}
\normab{u-u_\Tau}\lesssim h_\Tau^r\normaa{u-u_\Tau}.
\end{equation*}
\end{theorem}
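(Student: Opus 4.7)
The plan is a classical Aubin-Nitsche duality argument adapted to the eigenvalue setting, as in Strang-Fix and Raviart-Thomas (see also Giani's thesis for the general $\AAA$, $\BB$ case). The key observation is that the source problem associated with $a$ inherits the elliptic regularity $H^{1+r}$: the proof of Lemma~\ref{L:regularity} only uses that the right-hand side is in $L^2(\Omega)$, so the same statement holds for $w \in H^1_0(\Omega)$ satisfying $a(w,v) = b(f,v)$ for any $f \in L^2(\Omega)$, with $\|w\|_{H^{1+r}(\Omega)} \lesssim \|f\|_\Omega$. This is what permits the gain of an extra factor $h_\Tau^r$ in the $L^2$ estimate beyond the $H^1$ one.

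I first select the eigenfunction $u$: standard spectral approximation theory of Babu\v ska-Osborn guarantees that, for $h_\Tau$ small enough, there exists $u \in M(\lambda)$ with $\normaa{u - u_\Tau}$ arbitrarily small. Setting $e := u - u_\Tau$, a direct computation using $\normab{u}=\normab{u_\Tau}=1$ and the two eigenvalue equations produces the identity
\begin{equation*}
\normaa{e}^2 = (\lambda_\Tau - \lambda) + \lambda \normab{e}^2,
\end{equation*}
which gives the a priori bound $|\lambda - \lambda_\Tau| \leq \normaa{e}^2$.

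Next, consider the dual source problem: let $w \in H^1_0(\Omega)$ solve $a(w,v) = b(e, v)$ for all $v \in H^1_0(\Omega)$, so $\|w\|_{H^{1+r}(\Omega)} \lesssim \normab{e}$. Pick $w_\Tau \in \VV_\Tau$ (for example, the Scott-Zhang interpolant of $w$) with $\normaa{w - w_\Tau} \lesssim h_\Tau^r \normab{e}$. Testing the dual equation with $v = e$, splitting $a(w,e) = a(w-w_\Tau,e) + a(w_\Tau,e)$, and using the two eigenvalue equations on $a(w_\Tau, e) = \lambda b(u, w_\Tau) - \lambda_\Tau b(u_\Tau, w_\Tau)$ yields
\begin{equation*}
\normab{e}^2 \simeq a(w - w_\Tau, e) + \lambda b(e, w_\Tau) + (\lambda - \lambda_\Tau) b(u_\Tau, w_\Tau).
\end{equation*}
The first summand is bounded by $\lesssim h_\Tau^r \normab{e} \normaa{e}$; the third by $\lesssim \normaa{e}^2 \normab{e}$ via the first-paragraph identity. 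For the second, testing the dual equation with $u$ produces the clean identity $\lambda b(u, w) = b(e, u) = \normab{e}^2/2$, which extracts the leading contribution in the decomposition $b(e, w_\Tau) = b(u, w_\Tau) - b(u_\Tau, w_\Tau)$.

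The main obstacle is the absence of Galerkin orthogonality: $a(e, v)$ for $v \in \VV_\Tau$ does not vanish but equals $\lambda b(u, v) - \lambda_\Tau b(u_\Tau, v)$, which is of the same order as the left-hand side $\normab{e}^2$, so a brute-force bound does not close. The resolution exploits the spectral structure: decompose $e$ into its $b$-projection $Pe$ onto the finite-dimensional eigenspace $M(\lambda)$ (which, by the identity $b(u,u_\Tau) = 1 - \normab{e}^2/2$, is already of quadratic order $\normab{e}^2$, hence absorbable after one more $h_\Tau^r$ is extracted) and its $b$-orthogonal complement $(I-P)e$ (on which $L - \lambda I$ is invertible and Aubin-Nitsche applies genuinely). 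Combining these with $|\lambda - \lambda_\Tau| \lesssim \normaa{e}^2$ and using the smallness of $h_\Tau$ to absorb the resulting $h_\Tau^{2r}$ terms, the desired bound $\normab{e} \lesssim h_\Tau^r \normaa{e}$ follows.
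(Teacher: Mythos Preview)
The paper does not actually give a proof of this theorem: it only states the result and refers to \cite{Strang-Fix}, \cite{Raviart-Thomas} for the model case and to \cite{Giani,Giani-thesis} for the general $\AAA$, $\BB$ setting. Your sketch follows precisely the classical Aubin--Nitsche duality argument for eigenvalue problems that those references use, so there is nothing in the paper to compare against beyond the citations.

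On the substance of your outline: the identities $\normaa{e}^2=(\lambda_\Tau-\lambda)+\lambda\normab{e}^2$ and $b(u,u_\Tau)=1-\normab{e}^2/2$ are correct, and the duality decomposition
\[
\normab{e}^2=a(w-w_\Tau,e)+\lambda\,b(e,w_\Tau)+(\lambda-\lambda_\Tau)\,b(u_\Tau,w_\Tau)
\]
is exactly right. The only place that deserves a word of caution is the last paragraph: the claim $\normab{Pe}\lesssim\normab{e}^2$ is immediate in the simple case ($Pe=b(e,u)\,u=\tfrac12\normab{e}^2\,u$), but in the multiple case it requires that the Babu\v ska--Osborn eigenfunction $u$ be chosen parallel to the $b$-projection $Pu_\Tau$ of $u_\Tau$ onto the eigenspace, so that $b(u_\Tau,u^{(i)})=0$ for the remaining $b$-orthonormal basis vectors $u^{(i)}$ of $M(\lambda)$. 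With that choice (which is the standard one in \cite{Babuska_eigenvalue,Giani-thesis}) your absorption argument closes. This is a detail worth making explicit, not a gap.
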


The next result, which we will need for our proof of convergence is the discrete local lower bound, whose proof follows that of the continuous lower bound in~\cite{Duran}, but in order to make this article more self-contained we will include it here.

For $S\in \SSS$ we define $\omega_\Tau (S)$ as the union of the two elements in $\Tau$ sharing $S$. For $T\in\Tau$,  $\NT(T)\definedas\{T'\in\Tau : T'\cap T \neq \emptyset\}$ denotes the set of neighbors of $T$ in $\Tau$, and  $\omega_\Tau(T) \definedas \bigcup_{T'\in\NT(T)}T'$.
We also define $n_d\definedas 3$ if $d=2$ and $n_d\definedas 6$ if $d=3$. This guarantees that after $n_d$ bisections to an element, new nodes appear on each side and in the interior. Here we consider the newest-vertex bisection in two dimensions and the procedure of Kossaczk\'y in three dimensions~\cite{Alberta}.

\begin{theorem}[Discrete local lower bound]\label{T:general_lower_bound}
Let $T\in\Tau$ and let $\Tau_*$ be the triangulation of $\Omega$ which is obtained from $\Tau$ by bisecting $n_d$ times each element of $\NT(T)$. Let $\lambda_\Tau$ and $u_\Tau$ be a solution to the discrete problem~(\ref{E:disc-problem}). Let $\WW$ be a subspace of $H^1_0(\Omega)$ such that $\VV_{\Tau_*}\subset \WW$. If $\mu\in\RR$ and $w\in\WW$ satisfy   
\begin{equation*}
\left\{
\begin{array}{l}
a(w,v)=\mu \, b(w,v),\qquad \forall~v\in  \WW,\\
\normab{w}=1,
\end{array}
\right.
\end{equation*}
then 
$$\est{\lambda_\Tau,u_\Tau}{T}\lesssim  \normNT{\nabla (w-u_\Tau)} +h_T \normNT{\mu w-\lambda_\Tau u_\Tau} +h_T \normNT{R-\overline{R}}+h_T^{1/2} \normbT{J-\overline{J}},$$
where, for every $T'\in \NT(T)$,  $\overline{R}|_{T'}$ is the $L^2(T')$-projection of $R\definedas R(\lambda_\Tau,u_\Tau)$ onto $\PP_{\ell -1}$, and for every side $S\subset\partial T$, $\overline{J}|_{S}$ is the $L^2(S)$-projection of $J\definedas J(u_\Tau)$ onto $\PP_{\ell -1}$.
\end{theorem}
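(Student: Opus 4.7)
The plan follows the standard Verf\"urth bubble-function argument for residual estimators, with the essential twist that the test functions must live in $\VV_{\Tau_*}\subset\WW$ so that the discrete equation for $w$ can be invoked. After writing $R=\overline R+(R-\overline R)$ and $J=\overline J+(J-\overline J)$, the triangle inequality reduces the claim to bounding $h_T\|\overline R\|_{T'}$ for each $T'\in\NT(T)$ and $h_T^{1/2}\|\overline J\|_{S}$ for each side $S\subset\partial T$, since the remainders $h_T\|R-\overline R\|_{T'}$ and $h_T^{1/2}\|J-\overline J\|_S$ already appear on the right-hand side.

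For the element residual part, I would choose, for each $T'\in\NT(T)$, a sub-simplex $T''\in\Tau_*$ with $T''\subset T'$ and $\overline{T''}\cap\partial T'=\emptyset$ (whose existence is guaranteed by the $n_d$ bisections) and build an interior bubble $\psi_{T''}\in\VV_{\Tau_*}$ supported in $T''$, so that $\varphi_{T''}\definedas \overline R|_{T'}\,\psi_{T''}$ lies in $\VV_{\Tau_*}$ and enjoys the usual scaling $\|\nabla\varphi_{T''}\|_{T'}\lesssim h_T^{-1}\|\overline R\|_{T'}$ and $\|\varphi_{T''}\|_{T'}\lesssim\|\overline R\|_{T'}$, together with the polynomial equivalence $\|\overline R\|_{T'}^2\lesssim\int_{T'}\overline R\,\varphi_{T''}$. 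Then
\[
\|\overline R\|_{T'}^2\lesssim \int_{T'}R\,\varphi_{T''}+\int_{T'}(\overline R-R)\,\varphi_{T''}.
\]
Since $\varphi_{T''}$ vanishes on $\partial T'$, elementwise integration by parts gives $\int_{T'}R\,\varphi_{T''}=a(u_\Tau,\varphi_{T''})-\lambda_\Tau b(u_\Tau,\varphi_{T''})$. Because $\varphi_{T''}\in\WW$ the equation for $w$ yields $a(w,\varphi_{T''})=\mu b(w,\varphi_{T''})$, so
\[
\int_{T'}R\,\varphi_{T''}=a(u_\Tau-w,\varphi_{T''})+b(\mu w-\lambda_\Tau u_\Tau,\varphi_{T''}).
\]
Cauchy--Schwarz, the scaling estimates above, division by $\|\overline R\|_{T'}$, and multiplication by $h_T$ then give the bound on $h_T\|\overline R\|_{T'}$ in the desired form. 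For the jump part, I pick a side bubble $\psi_S\in\VV_{\Tau_*}$ supported in $\omega_\Tau(S)$, a polynomial extension $P$ of $\overline J$ from $S$ to $\omega_\Tau(S)$ with $\|P\|_{T_i}\lesssim h_T^{1/2}\|\overline J\|_S$, and set $\varphi_S=P\psi_S\in\VV_{\Tau_*}$. Integrating by parts over the two elements sharing $S$ now produces the jump $\int_S J\,\varphi_S$ together with element-residual terms $\int_{T_i}R\,\varphi_S$; the former equals $a(u_\Tau-w,\varphi_S)+b(\mu w-\lambda_\Tau u_\Tau,\varphi_S)-\sum_i\int_{T_i}R\,\varphi_S$ by the same manipulation as before, and the element-residual terms are absorbed using the already-proved bound for $h_T\|\overline R\|_{T_i}$ plus the oscillation $h_T\|R-\overline R\|_{T_i}$. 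Summing over the finitely many $T'\in\NT(T)$ and sides $S\subset\partial T$ completes the proof.

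The main obstacle is the construction of the bubbles $\psi_{T''}$ and $\psi_S$ as genuine elements of $\VV_{\Tau_*}$ with the usual positivity, support, and scaling/equivalence properties. Unlike the continuous lower bound, where one uses standard barycentric bubbles in $H^1_0(T)$, here the test functions must be conforming finite-element functions on the refined mesh, and this is precisely what forces the hypothesis of $n_d$ bisections (with $n_d=3$ for $d=2$ and $n_d=6$ for $d=3$): fewer refinements of newest-vertex bisection or Kossaczk\'y's procedure need not create interior and side nodes inside every element of $\NT(T)$, and without such nodes one cannot fabricate bubbles that vanish on the appropriate boundaries while remaining in $\VV_{\Tau_*}$. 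Once this geometric/interpolation scaffolding is in place, the remaining steps are direct transcriptions of the continuous lower-bound proof in~\cite{Duran}, with the one substitution that the eigenvalue equation for $w\in\WW$ plays the role the equation for the continuous $u$ plays there.
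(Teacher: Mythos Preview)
Your overall strategy is the paper's: peel off the oscillation terms by the triangle inequality, then bound $h_T\|\overline R\|_{T'}$ and $h_T^{1/2}\|\overline J\|_S$ by testing the residual identity against discrete bubbles in $\VV_{\Tau_*}\subset\WW$, inserting $a(w,\cdot)=\mu\,b(w,\cdot)$, and using inverse/scaling estimates. The algebraic part of your argument is correct and matches the paper line by line.

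The gap is in your element bubble. You assert that the $n_d$ bisections of $T'$ produce a sub-simplex $T''\in\Tau_*$ with $\overline{T''}\cap\partial T'=\emptyset$, and then want $\psi_{T''}\in\VV_{\Tau_*}$ supported in that single element. Neither holds. With $n_d=3$ (2D newest-vertex) or $n_d=6$ (3D Kossaczk\'y), the refinement guarantees only a new \emph{vertex} interior to $T'$ and new vertices on each of its sides; every child simplex still has at least one vertex on $\partial T'$, so no fully interior $T''$ exists. And even if such a $T''$ existed, for $\ell=1$ there is no nonzero function in $\VV_{\Tau_*}$ supported in a single element of $\Tau_*$, so your $\psi_{T''}$ would be forced to vanish.

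The paper's remedy is simple and works for every $\ell\ge 1$: let $x_{T'}^{\mathrm{int}}$ be the interior vertex created by the bisections and let $\varphi_{T'}$ be the piecewise-linear nodal basis function of $\Tau_*$ at that vertex. Its support is the star of $x_{T'}^{\mathrm{int}}$, which is contained in $T'$, and it vanishes on $\partial T'$. Since $\overline R|_{T'}\in\PP_{\ell-1}(T')$ is a single polynomial on all of $T'$, the product $v=\overline R\,\varphi_{T'}$ is continuous, piecewise of degree $\le\ell$ on $\Tau_*$, and hence lies in $\VV_{\Tau_*}$; the equivalence $\|\overline R\|_{T'}^2\lesssim\int_{T'}\overline R^{\,2}\varphi_{T'}$ and the inverse estimate then follow with shape-regularity constants. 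For the side term your description is already compatible with the paper's choice: the hat function at the new vertex interior to $S$, multiplied by a continuous $\PP_{\ell-1}$ extension of $\overline J$ to $\omega_\Tau(S)$.
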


\begin{proof}
\step{1} We first analyze the element residual. We obviously have
\begin{equation}\label{E:lowbound_aux1}
\normT{R}\leq \normT{\overline{R}}+\normT{R-\overline{R}}.
\end{equation}
Let $x_T^{int}$ denote the vertex of $\Tau_*$ which is interior to $T$. Let $\varphi_T$ be the continuous piecewise linear function over $\Tau_*$ such that $\varphi_T(x_T^{int})=1$ and $\varphi_T$ vanishes over all the others vertices of $\Tau_*$. Then
\begin{equation}\label{E:interior-residual-split}
 \normT{\overline{R}}^2 \lesssim  \int_T \overline{R}^2 \varphi_T
= \int_T \overline{R}(\overline{R} \varphi_T)
= \int_T R(\overline{R} \varphi_T)+\int_T (\overline{R}-R)\overline{R} \varphi_T.
\end{equation}

If we define $v\definedas\overline{R} \varphi_T\in\VV_{\Tau_*}\subset \WW$, taking into account that $v$ vanishes over $\partial T$, for the first integral in~(\ref{E:interior-residual-split}) we have
\begin{align*}
\int_T Rv &= \int_T (-\nabla \cdot (\AAA\nabla u_\Tau)-\lambda_\Tau \BB u_\Tau)v\\
&= \int_T \AAA\nabla u_\Tau \cdot\nabla v -\int_T \lambda_\Tau \BB u_\Tau v\\
&= \int_T \AAA\nabla u_\Tau \cdot\nabla v -\int_T \lambda_\Tau \BB u_\Tau v-\int_T \AAA\nabla w \cdot \nabla v+\int_T \mu \BB w v\\
&= \int_T \AAA\nabla (u_\Tau-w) \cdot\nabla v +\int_T \BB(\mu w-\lambda_\Tau u_\Tau)v\\
&\lesssim \normT{\nabla (u_\Tau-w)}\normT{\nabla v}+\normT{\mu w-\lambda_\Tau u_\Tau}\normT{v} .
\end{align*}

For the second integral in~(\ref{E:interior-residual-split}) we have
\begin{align*}
\int_T (\overline{R}-R) \overline{R}\varphi_T \leq \normT{\overline{R}\varphi_T}\normT{\overline{R}-R}
\leq \normT{\overline{R}}\normT{\overline{R}-R}.
\end{align*}

Therefore, taking into account that $\normT{\nabla v}\lesssim \frac{1}{h_T}\normT{v}$ and $\normT{v}\leq \normT{\overline{R}}$ we can write
\begin{equation*}
\normT{\overline{R}}^2 \lesssim \normT{\nabla (u_\Tau-w)}\frac{1}{h_T}\normT{\overline{R}}+\normT{\mu w-\lambda_\Tau u_\Tau}\normT{\overline{R}} +\normT{\overline{R}}\normT{\overline{R}-R},
\end{equation*}
and then
\begin{equation}\label{E:lowbound_aux2}
h_T\normT{\overline{R}}\lesssim \normT{\nabla (u_\Tau-w)}+h_T\normT{\mu w-\lambda_\Tau u_\Tau} +h_T\normT{\overline{R}-R}.
\end{equation}
Now, from~(\ref{E:lowbound_aux1}) and~(\ref{E:lowbound_aux2}) it follows that
\begin{equation}\label{E:lowbound-int}
h_T\normT{R}\lesssim \normT{\nabla (u_\Tau-w)}+h_T\normT{\mu w-\lambda_\Tau u_\Tau}+h_T\normT{R-\overline{R}} .
\end{equation}
The same bound holds replacing $T$ by $T'$, for all $T'\in \NT(T)$.

\step{2} Secondly, we estimate the jump residual. Let $S$ be a side of $T$ and let $T_1$ and $T_2$ denote the elements sharing $S$. Obviously, one of them is $T$ itself. As before we proceed by bounding first the projection $\overline{J}$ of $J$, since
\begin{equation}\label{E:lowbound_aux3}
\normS{J}\leq \normS{\overline{J}}+\normS{J-\overline{J}}.
\end{equation}
Let $x_S^{int}$ denote the vertex of $\Tau_*$ which is interior to $S$. Let $\varphi_S$ be the continuous piecewise linear function over $\Tau_*$ such that $\varphi_S(x_S^{int})=1$ and $\varphi_S$ vanishes over all the others vertices of $\Tau_*$. Then

\begin{equation}\label{E:split-jump}
\normS{\overline{J}}^2 \lesssim \int_S (\overline{J})^2 \varphi_S
= \int_S \overline{J}(\overline{J} \varphi_S)
= \int_S J(\overline{J} \varphi_S)+\int_S (\overline{J}-J)\overline{J} \varphi_S.
\end{equation}

Now, we extend $\overline{J}$ to $\omega_\Tau(S)$ as constant along the direction of one side of each $T_i$, for $i=1,2$, and still call this extention $\overline{J}$. Observe that $\overline{J}$ is continuous on $\omega_\Tau(S)$ and $\overline{J}|_{T_i}\in \PP_{\ell-1}(T_i)$, for $i=1,2$.

Since $v\definedas\overline{J} \varphi_S\in\VV_{\Tau_*}\subset \WW$ and taking into account that $v=0$ on $\partial (\omega_\Tau (S))$, for the first integral in~(\ref{E:split-jump}) we have
\begin{align*}
\int_S Jv &= \sum_{i=1,2} \int_{\partial T_i}v\AAA\nabla u_\Tau\cdot \overrightarrow{n_i}=\sum_{i=1,2} \int_{T_i}\nabla\cdot(v\AAA\nabla u_\Tau)\\
&= \sum_{i=1,2} \left(\int_{T_i} \AAA\nabla u_\Tau\cdot \nabla v + \int_{T_i} v\nabla\cdot(\AAA\nabla u_\Tau)\right)\\
&= \sum_{i=1,2} \left(\int_{T_i} \AAA\nabla u_\Tau\cdot \nabla v + \int_{T_i} v\nabla\cdot(\AAA\nabla u_\Tau)\right)+\int_{T_1\cup T_2} \mu \BB w v-\int_{T_1\cup T_2} \AAA\nabla w\cdot \nabla v\\
&= \sum_{i=1,2} \left(\int_{T_i} \AAA\nabla (u_\Tau-w)\cdot \nabla v + \int_{T_i} (\nabla\cdot(\AAA\nabla u_\Tau)+\mu \BB w)v\right)\\
&= \int_{\omega_\Tau (S)} \AAA\nabla (u_\Tau-w)\cdot \nabla v +\sum_{i=1,2} \left(\int_{T_i} -Rv+\int_{T_i} \BB (\mu w-\lambda_\Tau u_\Tau)v\right)\\
&\lesssim \normNS{\nabla (u_\Tau-w)}\normNS{\nabla v}+ \normNS{R}\normNS{v}+\normNS{\mu w-\lambda_\Tau u_\Tau}\normNS{v}.
\end{align*}

For the second integral in~(\ref{E:split-jump}) we have
\begin{align*}
\int_S (\overline{J}-J) \overline{J} \varphi_S \leq \normS{\overline{J}\varphi_S}\normS{\overline{J}-J}
\leq \normS{\overline{J}}\normS{\overline{J}-J}.
\end{align*}

Hence, taking into account that $\normNS{\nabla v} \lesssim \frac{1}{h_T}\normNS{v}$,\quad $\normNS{v}\leq \normNS{\overline{J}}$ and $\normNS{\overline{J}}\lesssim h_T^{1/2}\normS{\overline{J}}$ we can write 
\begin{align*}
\normS{\overline{J}}^2 &\lesssim \normNS{\nabla (u_\Tau-w)}h_T^{-1/2}\normS{\overline{J}}+ \normNS{R}h_T^{1/2}\normS{\overline{J}} \\
&\quad+\normNS{\mu w-\lambda_\Tau u_\Tau}h_T^{1/2}\normS{\overline{J}}
+ \normS{\overline{J}}\normS{\overline{J}-J},
\end{align*}
and then
\begin{equation}\label{E:lowbound_aux4}
h_T^{1/2}\normS{\overline{J}} \lesssim \normNS{\nabla (u_\Tau-w)}+ h_T\normNS{R}+h_T\normNS{\mu w-\lambda_\Tau u_\Tau}
+ h_T^{1/2}\normS{\overline{J}-J}.
\end{equation}
Now, from~(\ref{E:lowbound_aux3}) and~(\ref{E:lowbound_aux4}) it follows
\begin{equation*}
h_T^{1/2}\normS{J}\lesssim \normNS{\nabla (u_\Tau-w)}+ h_T\normNS{R}+h_T\normNS{\mu w-\lambda_\Tau u_\Tau}
+ h_T^{1/2}\normS{J-\overline{J}}.
\end{equation*}
Adding the last equation over all $S\subset \partial T$, we obtain
\begin{equation*}
h_T^{1/2}\normbT{J} \lesssim \normNT{\nabla (u_\Tau-w)}+ h_T\normNT{R}+h_T\normNT{\mu w-\lambda_\Tau u_\Tau}
+ h_T^{1/2}\normbT{J-\overline{J}}.
\end{equation*}
The claim of this theorem follows by adding this last inequality and~(\ref{E:lowbound-int}).
\end{proof}

The next result is some kind of stability bound for the oscillation terms, which will be useful to obtain the bound of Corollary~\ref{C:lowerbound} which is what will be effectively	 used in our convergence proof.
\begin{lemma}\label{L:oscillation}
Under the assumptions of Theorem~\ref{T:general_lower_bound} there holds
$$h_T \normNT{R-\overline{R}}+h_T^{1/2} \normbT{J-\overline{J}}\lesssim h_T (2+\lambda_\Tau)\|u_\Tau\|_{H^1(\omega_\Tau(T))}.$$
\end{lemma}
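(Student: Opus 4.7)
The plan is to bound the two pieces $h_T\|R-\overline{R}\|_{\omega_\Tau(T)}$ and $h_T^{1/2}\|J-\overline{J}\|_{\partial T}$ separately, using the best-approximation property of the $L^2$-projections together with the fact that $u_\Tau$ is a polynomial on each element of $\Tau$ and that $\AAA$ is piecewise $W^{1,\infty}$ with the triangulation resolving its discontinuities.

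For the element residual I split $R=-\nabla\cdot(\AAA\nabla u_\Tau)-\lambda_\Tau\BB u_\Tau$ into its two pieces. On each $T'\in\NT(T)$, let $\AAA_{T'}$ be the (entrywise) average of $\AAA$ over $T'$; then $-\nabla\cdot(\AAA_{T'}\nabla u_\Tau)$ is a polynomial of degree $\le \ell-2$ and in particular belongs to $\PP_{\ell-1}(T')$. By the optimality of the $L^2$-projection, the error $\|R-\overline{R}\|_{T'}$ restricted to this piece is bounded by the distance to this polynomial candidate, and expanding the divergence by the product rule and using $\|\nabla\AAA\|_{L^\infty(T')}\lesssim 1$, $\|\AAA-\AAA_{T'}\|_{L^\infty(T')}\lesssim h_{T'}$ together with the inverse estimate $\|D^2 u_\Tau\|_{T'}\lesssim h_{T'}^{-1}\|\nabla u_\Tau\|_{T'}$ yields a bound of order $\|\nabla u_\Tau\|_{T'}$. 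For the zeroth-order piece I use the trivial candidate $0$, obtaining $\|{-}\lambda_\Tau\BB u_\Tau\|_{T'}\lesssim\lambda_\Tau\|u_\Tau\|_{T'}$.

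For the jump residual on a side $S\subset\partial T$ with adjacent elements $T_1,T_2$, the function $Q\definedas(\AAA_1\nabla u_\Tau|_{T_1})\cdot\overrightarrow{n_1}+(\AAA_2\nabla u_\Tau|_{T_2})\cdot\overrightarrow{n_2}$, with $\AAA_i$ the average of $\AAA$ on $T_i$, restricts to an element of $\PP_{\ell-1}(S)$, so $\|J-\overline{J}\|_S\le\|J-Q\|_S$. Each summand in $J-Q$ has the form $((\AAA-\AAA_i)\nabla u_\Tau|_{T_i})\cdot\overrightarrow{n_i}$, of size $O(h_{T_i})$ in $L^\infty$ times $\nabla u_\Tau|_{T_i}$; the inverse trace estimate $\|p\|_S\lesssim h_{T_i}^{-1/2}\|p\|_{T_i}$ for polynomials $p$ on $T_i$ then produces $\|J-Q\|_S\lesssim h_T^{1/2}\|\nabla u_\Tau\|_{\omega_\Tau(S)}$.

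Putting the two bounds together, multiplying by the appropriate power of $h_T$, summing over $\NT(T)$ and over the sides of $T$, and bounding $L^2$-norms of $\nabla u_\Tau$ by the $H^1$-norm of $u_\Tau$ gives an estimate of the form $h_T(1+\lambda_\Tau)\|u_\Tau\|_{H^1(\omega_\Tau(T))}$, which is trivially dominated by $h_T(2+\lambda_\Tau)\|u_\Tau\|_{H^1(\omega_\Tau(T))}$. The computation is routine; the only point needing care is the justification that $\|\nabla\AAA\|_{L^\infty(T')}$ and $\|\AAA-\AAA_{T'}\|_{L^\infty(T')}/h_{T'}$ are uniformly bounded over the family, and this holds precisely because every $\Tau$ under consideration resolves the discontinuities of $\AAA$, so $\AAA|_{T'}\in W^{1,\infty}(T')$ with a global bound.
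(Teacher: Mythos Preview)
Your proposal is correct and follows essentially the same route as the paper's own proof: both split the residual into the diffusion and reaction pieces, replace $\AAA$ by its elementwise mean to obtain a polynomial in $\PP_{\ell-1}$ that the projection reproduces exactly, and then control the difference via $\|\AAA-\AAA_{T'}\|_{L^\infty(T')}\lesssim h_{T'}$ combined with an inverse inequality; the jump term is handled identically in both arguments. The only cosmetic difference is in the bookkeeping of the final constant: the paper keeps the element contribution as $(1+\lambda_\Tau)$ and the jump contribution as an additional $1$, summing to the stated $(2+\lambda_\Tau)$, whereas you lump everything into $(1+\lambda_\Tau)$ and then note it is dominated by $(2+\lambda_\Tau)$---either is fine under~$\lesssim$.
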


\begin{proof}
\step{1} We first consider the term corresponding to the element residual.
\begin{align}
\normT{R-\overline{R}} &= \normT{-\nabla \cdot (\AAA\nabla u_\Tau)-\lambda_\Tau \BB u_\Tau +\overline{\nabla \cdot (\AAA\nabla u_\Tau)}+\lambda_\Tau \overline{\BB u_\Tau}} \notag\\ 
&\leq \normT{-\nabla \cdot (\AAA\nabla u_\Tau)+\overline{\nabla \cdot (\AAA\nabla u_\Tau)}}+\normT{\lambda_\Tau \big(\BB u_\Tau - \overline{\BB u_\Tau}\big)},\label{E:Proj_aux0}
\end{align}
where, as before, the bar denotes the $L^2(T)$-projection onto $\PP_{\ell-1}(T)$.

Let $A^T=(A^T_{ij})$ denote the mean value of $\AAA =(\AAA_{ij})$ over the element $T$, and note that $\nabla \cdot (A^T\nabla u_\Tau)=\overline{\nabla \cdot (A^T\nabla u_\Tau)}$. Thus, for the first term in the right hand side of~(\ref{E:Proj_aux0}) we have
\begin{align*}
\normT{-\nabla \cdot (\AAA\nabla u_\Tau)+\overline{\nabla \cdot (\AAA\nabla u_\Tau)}}&= \normT{\nabla \cdot \big((A^T-\AAA)\nabla u_\Tau\big)-\overline{\nabla \cdot \big((A^T-\AAA)\nabla u_\Tau\big)}}\\
&\leq \normT{\big(\nabla \cdot (A^T-\AAA)\big)\cdot\nabla u_\Tau-\overline{\big(\nabla\cdot(A^T-\AAA)\big)\cdot \nabla u_\Tau}}\\
& \quad+\normT{(A^T-\AAA):D^2 u_\Tau-\overline{(A^T-\AAA):D^2 u_\Tau}}\\
&\leq \normT{\big(\nabla \cdot(A^T-\AAA)\big)\cdot\nabla u_\Tau}+\normT{(A^T-\AAA):D^2 u_\Tau}\\
&\lesssim \|\AAA\|_{W^1_\infty(T)}\normT{\nabla u_\Tau}+\|A^T-\AAA\|_{L^\infty(T)}\normT{D^2 u_\Tau}.
\end{align*}
Since $
\|A^T-\AAA\|_{L^\infty(T)}\lesssim  h_T \|\AAA\|_{W^1_\infty(T)}$,
an inverse inequality leads to
\begin{equation*}
\normT{-\nabla \cdot (\AAA\nabla u_\Tau)+\overline{\nabla \cdot (\AAA\nabla u_\Tau)}} \leq  \|\AAA\|_{W^1_\infty(T)}\normT{\nabla u_\Tau} + h_T \|\AAA\|_{W^1_\infty(T)}\normT{D^2 u_\Tau}
\lesssim 
\normT{\nabla u_\Tau}.
\end{equation*}

For the second term in the right hand side of~(\ref{E:Proj_aux0}) we have
$$
\normT{\lambda_\Tau \big(\BB u_\Tau - \overline{\BB u_\Tau}\big)}\leq \normT{\lambda_\Tau \BB u_\Tau }\lesssim \lambda_\Tau\normT{u_\Tau},
$$
and therefore,
$$\normT{R-\overline{R}}\lesssim (1+\lambda_\Tau)\|u_\Tau\|_{H^1(T)}.$$
The same estimation holds for all elements in $\NT(T)$, and consequently, 
\begin{equation}\label{E:interior-osc}
h_T\normNT{R-\overline{R}}\lesssim h_T(1+\lambda_\Tau)\|u_\Tau\|_{H^1(\omega_\Tau(T))}.
\end{equation}

\step{2} Next, we analyze the jump residual. Let $S$ be a side of $T$ and let $T_1$ and $T_2$ denote the elements sharing $S$. Again, if the bar denotes the $L^2(S)$-projection onto $\PP_{\ell-1}(S)$, it follows that
\begin{align*}
\normS{J-\overline{J}} &=\normS{\sum_{i=1,2}(\AAA\nabla u_\Tau)|_{T_i}\cdot \overrightarrow{n_i}-\overline{\sum_{i=1,2}(\AAA\nabla u_\Tau)|_{T_i}\cdot \overrightarrow{n_i}}}.
\end{align*}

Using that $(A^{T_i}\nabla u_\Tau)|_{T_i}\cdot \overrightarrow{n_i}=\overline{(A^{T_i}\nabla u_\Tau)|_{T_i}\cdot \overrightarrow{n_i}}$ we have

\begin{align*}
\normS{J-\overline{J}} &=\normS{\sum_{i=1,2}\big((\AAA-A^{T_i})\nabla u_\Tau\big)|_{T_i}\cdot \overrightarrow{n_i}-\overline{\sum_{i=1,2}\big((\AAA-A^{T_i})\nabla u_\Tau\big)|_{T_i}\cdot \overrightarrow{n_i}}}\\
&\leq \normS{\sum_{i=1,2}\big((\AAA-A^{T_i})\nabla u_\Tau\big)|_{T_i}\cdot \overrightarrow{n_i}}\\
&\leq \sum_{i=1,2}\normS{\big((\AAA-A^{T_i})\nabla u_\Tau\big)|_{T_i}\cdot \overrightarrow{n_i}}\\
&\leq \sum_{i=1,2}\|\AAA|_{T_i}-A^{T_i}\|_{L^\infty(S)}\normS{\nabla u_\Tau|_{T_i}}\\
&\lesssim \sum_{i=1,2}h_T\|\AAA\|_{W^1_\infty(T_i)}h_T^{-1/2}\|u_\Tau\|_{H^1(T_i)}\\
&\lesssim h_T^{1/2}\|u_\Tau\|_{H^1(\omega_\Tau(S))}.
\end{align*}

Therefore, 
\begin{equation}\label{E:jump-osc}
h_T^{1/2}\normbT{J-\overline{J}}\lesssim h_T\|u_\Tau\|_{H^1(\omega_\Tau(T))}.
\end{equation}
Adding~(\ref{E:interior-osc}) and~(\ref{E:jump-osc}) we obtain the claim of this lemma.
\end{proof}


As an immediate consequence of Theorem~\ref{T:general_lower_bound} and Lemma~\ref{L:oscillation} the following result holds.

\begin{corollary}[Lower bound]\label{C:lowerbound}
Under the assumptions of Theorem~\ref{T:general_lower_bound} there holds
$$\est{\lambda_\Tau,u_\Tau}{T}\lesssim \normNT{\nabla (w-u_\Tau)} +h_T \normNT{\mu w}+h_T(1+\lambda_\Tau)\|u_\Tau\|_{H^1(\omega_\Tau(T))}.$$
\end{corollary}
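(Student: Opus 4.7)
The plan is essentially algebraic: the corollary is obtained by chaining Theorem~\ref{T:general_lower_bound} with Lemma~\ref{L:oscillation} and applying a triangle inequality to isolate $\mu w$ from $\lambda_\Tau u_\Tau$. I would begin by writing down the four-term bound from Theorem~\ref{T:general_lower_bound},
$$\est{\lambda_\Tau,u_\Tau}{T}\lesssim \normNT{\nabla (w-u_\Tau)} +h_T \normNT{\mu w-\lambda_\Tau u_\Tau} +h_T \normNT{R-\overline{R}}+h_T^{1/2} \normbT{J-\overline{J}},$$
and observing that the last two (oscillation) terms are exactly what Lemma~\ref{L:oscillation} controls by $h_T(2+\lambda_\Tau)\|u_\Tau\|_{H^1(\omega_\Tau(T))}$.

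Next, I would split the middle term using the triangle inequality,
$$h_T\normNT{\mu w-\lambda_\Tau u_\Tau}\leq h_T\normNT{\mu w}+h_T\lambda_\Tau\normNT{u_\Tau},$$
and then bound $\normNT{u_\Tau}\leq \|u_\Tau\|_{H^1(\omega_\Tau(T))}$ in order to merge the second summand on the right with the oscillation contribution. Adding these pieces produces a prefactor of the form $h_T(2+2\lambda_\Tau)\|u_\Tau\|_{H^1(\omega_\Tau(T))}$, which is absorbed into $h_T(1+\lambda_\Tau)\|u_\Tau\|_{H^1(\omega_\Tau(T))}$ up to a universal constant (since $2+2\lambda_\Tau\leq 2(1+\lambda_\Tau)$ and we work with $\lesssim$).

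There is no real obstacle here: the two auxiliary results (Theorem~\ref{T:general_lower_bound} and Lemma~\ref{L:oscillation}) have already done all the work, and the corollary is simply the form of the lower bound that is convenient for the subsequent convergence proof, in which the term $h_T\normNT{\mu w}$ will go to zero as $h_T\to 0$ (because $\mu w\in L^2(\Omega)$) and the factor $\|u_\Tau\|_{H^1(\omega_\Tau(T))}$ is uniformly controlled by the normalization $\normab{u_\Tau}=1$ together with $a(u_\Tau,u_\Tau)=\lambda_\Tau$. Only this repackaging step needs to be executed carefully.
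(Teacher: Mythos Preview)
Your argument is correct and is exactly the combination of Theorem~\ref{T:general_lower_bound} with Lemma~\ref{L:oscillation} that the paper intends; the paper itself records Corollary~\ref{C:lowerbound} as an ``immediate consequence'' of these two results without further details. The only step you add is the triangle-inequality splitting of $h_T\normNT{\mu w-\lambda_\Tau u_\Tau}$, which is indeed the intended (and trivial) way to reach the stated form.
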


\subsection{Adaptive loop}\label{S:adloop}

Our goal is to use an adaptive method to approximate the $j$-th eigenvalue and one of its eigenfunctions, for some fixed $j \in \NN$. From now on, we thus keep $j\in\NN$ fixed,  and let $\lambda$ denote the $j$-th eigenvalue of (\ref{E:cont-problem}) and $u$ an eigenfunction in $M(\lambda)$. 

The algorithm for approximating $\lambda$ and $M(\lambda)$ is an iteration of the following main steps:
\begin{enumerate}
 \item [(1)] $(\lambda_k,u_k)\definedas \textsf{SOLVE}(\VV_k)$.
 \item [(2)] $\{\eta_k(T)\}_{T\in\Tau_k}\definedas \textsf{ESTIMATE}(\lambda_k,u_k,\Tau_k)$.
 \item [(3)] $\MM_k\definedas \textsf{MARK}(\{\eta_k(T)\}_{T\in\Tau_k},\Tau_k)$.
 \item [(4)] $\Tau_{k+1}\definedas \textsf{REFINE}(\Tau_k,\MM_k)$, increment $k$.
\end{enumerate}

This is the same loop considered in~\cite{MSV_convergence}, the difference lies in the building blocks which we now describe in detail.

If $\Tau_k$ is a conforming triangulation of $\Omega$, the module \textsf{SOLVE} takes the space $\VV_k \definedas \VV_{\Tau_k}$ as input argument and outputs the $j$-th eigenvalue of the discrete problem~(\ref{E:disc-problem}) with $\Tau=\Tau_k$, i.e., $\lambda_k\definedas \lambda_{j,\Tau_k}$, and a corresponding eigenfunction $u_k\in\VV_k$. Therefore, $\lambda_k$ and $u_k$ satisfy
\begin{equation}\label{E:kdisc-problem}
\left\{
\begin{array}{l}
a(u_k,v_k)=\lambda_k \, b(u_k,v_k),\qquad \forall~v_k\in  \VV_k,\\
\normab{u_k}=1.
\end{array}
\right.
\end{equation}


Given $\Tau_k$ and the corresponding outputs $\lambda_k$ and $u_k$ of \textsf{SOLVE}, the module \textsf{ESTIMATE} computes and outputs the a posteriori error estimators $\{\eta_k(T)\}_{T\in\Tau_k}$, where
$$\eta_k(T)\definedas \eta_{\Tau_k}(\lambda_k,u_k;T).$$

Based upon the a posteriori error indicators $\{\eta_k(T)\}_{T\in\Tau_k}$, the module \textsf{MARK} collects elements of $\Tau_k$ in $\MM_k$. In order to simplify the presentation, the only requirement that we make on the module \textsf{MARK} is that the set of marked elements $\MM_k$ contains at least one element of $\Tau_k$ holding the largest value of estimator. That is, there exists one element $T_k^{\max} \in \MM_k$ such that
\[
 \eta_k(T_k^{\max}) = \max_{T \in \Tau_k} \eta_k(T).
\]
Whenever a marking strategy satisfies this assumption, we call it \emph{reasonable}, since this is what practitioners do in order to maximize the error reduction with a minimum effort. The most commonly used marking strategies, e.g., \textit{Maximum strategy} and \textit{Equidistribution strategy}, fulfill this condition, which is sufficient to guarantee that
\begin{equation}\label{E:marking}
 T \in \Tau_k \setminus \MM_k \qquad\Longrightarrow \qquad 
\eta_k(T) \lesssim \eta_k(\MM_k) \definedas \bigg( \sum_{T \in \MM_k} \eta_k(T)^2 \bigg)^{1/2}.
\end{equation}
This is slightly weaker, and is what we will use in our proof. The original \textit{D\"orfler's strategy} also guarantees~(\ref{E:marking}).

The refinement procedure \textsf{REFINE} takes the triangulation $\Tau_k$ and the subset $\MM_k\subset \Tau_k$ as input arguments. We require that all elements of $\MM_k$ are refined (at least once), and that a new conforming triangulation $\Tau_{k+1}$ of $\Omega$, which is a refinement of $\Tau_k$, is returned as output. 

In this way, starting with an initial conforming triangulation $\Tau_0$ of $\Omega$ and iterating the steps (1),(2),(3) and (4) of this algorithm, we obtain a sequence of successive conforming refinements of $\Tau_0$ called $\Tau_1,\Tau_2,\ldots$ and the corresponding outputs $(\lambda_k,u_k)$, $\{\eta_k(T)\}_{T\in\Tau_k}$, $\MM_k$ of the modules \textsf{SOLVE}, \textsf{ESTIMATE} and \textsf{MARK}, respectively.

For simplicity, we consider for the module \textsf{REFINE}, the concrete choice of the \textit{newest vertex} bisection procedure in two dimensions and the bisection procedure of Kossaczk\'y in three dimensions~\cite{Alberta}. Both these procedures refine the marked elements and some additional ones in order to keep conformity, and they also guarantee that
$$\kappa\definedas \sup_{k\in\NN_0} \kappa_{\Tau_k}<\infty,$$
i.e., $\{\Tau_k\}_{k\in\NN_0}$ is a sequence shape regular of triangulations of $\Omega$. 
It is worth mentioning that we do not assume \textsf{REFINE} to enforce the so-called \emph{interior node property}, and convergence is guaranteed nevertheless, this is an important difference with respect to~\cite{Giani}.

Regarding the module \textsf{MARK}, we stress that the marking is done only according to the error estimators; no marking due to oscillation is necessary, this is another important difference with respect to~\cite{Giani}, where the set of marked elements has to be enlarged so that D\"orfler's criterion is satisfied not only by the data oscillation terms, but also by the oscillation of the current solution $u_k$.


\section{Convergence to a limiting pair}\label{S:uinfty}

In this section we will prove that the sequence of discrete eigenpairs $\{(\lambda_k, u_k)\}_{k\in\NN_0}$ obtained by \textsf{SOLVE} throughout the adaptive loop of Section~\ref{S:adloop} has the following property: $\lambda_k$ converges to some $\lambda_\infty \in \RR$ and there exists a subsequence $\{u_{k_m}\}_{m\in\NN_0}$ of $\{u_k\}_{k\in\NN_0}$ converging in $H^1(\Omega)$ to a function $u_\infty$. 


Let us define the limiting space as $\VV_\infty\definedas\overline{\cup \VV_k}^{H^1_0(\Omega)}$, and note that $\VV_\infty$ is a closed subspace of $H^1_0(\Omega)$ and therefore, it is itself a Hilbert space with the inner product inherited from $H^1_0(\Omega)$.


Since $\Tau_{k+1}$ is always a refinement of $\Tau_k$, by the Minimum-Maximum principle $\{\lambda_k\}_{k\in\NN_0}$ is a decreasing sequence bounded below by $\lambda$. Therefore, there exists $\lambda_\infty>0$ such that
\begin{equation*}
\lambda_k \searrow \lambda_\infty.
\end{equation*}

From~(\ref{E:kdisc-problem}) it follows that 
\begin{equation}\label{E:normaa of u_k}
\normaa{u_k}^2=a(u_k,u_k)=\lambda_k b(u_k,u_k)=\lambda_k \normab{u_k}^2=\lambda_k \rightarrow \lambda_\infty,
\end{equation}
and therefore, that $\{u_k\}_{k\in\NN_0}$ is a bounded sequence in $\VV_\infty$. Then, there exists a subsequence $\{u_{k_m}\}_{m\in\NN_0}$ weakly convergent in $\VV_\infty$ to a function $u_\infty\in\VV_\infty$, so
\begin{equation}\label{E:u_k conv weakly in H1}
u_{k_m} \rightharpoonup u_\infty \quad \textrm{in}\quad H^1_0(\Omega).
\end{equation}
Using Rellich's theorem we can extract a subsequence of the last one, which we still denote $\{u_{k_m}\}_{m\in\NN_0}$, such that
\begin{equation}\label{E:u_k conv in L2}
u_{k_m} \longrightarrow u_\infty \quad \textrm{in}\quad L^2(\Omega).
\end{equation}

If $k_0\in\NN_0$ and $k_m\geq k_0$, for all $v_{k_0}\in\VV_{k_0}$ we have that
$
a(u_{k_m},v_{k_0})=\lambda_{k_m}b(u_{k_m},v_{k_0}),
$
, and when $m$ tends to infinity, we obtain that
$
a(u_\infty,v_{k_0})=\lambda_\infty b(u_\infty,v_{k_0})
$
. Since $k_0\in\NN_0$ and $v_{k_0}\in\VV_{k_0}$ are arbitrary we have that
\begin{equation}\label{E:infty equation}
a(u_\infty,v)=\lambda_\infty b(u_\infty,v), \qquad \forall~v\in \VV_\infty.
\end{equation}
On the other hand, since that $\normab{u_{k_m}}=1$, considering~(\ref{E:u_k conv in L2}) we conclude that
$\normab{u_\infty}=1.$
Now, taking into account~(\ref{E:infty equation}) we have that
$$\normaa{u_\infty}^2=\lambda_\infty \normab{u_\infty}^2=\lambda_\infty.$$ 
From (\ref{E:normaa of u_k}) it follows that 
$\normaa{u_{k_m}}^2=\lambda_{k_m} \longrightarrow \lambda_\infty,$
and therefore,
$\normaa{u_{k_m}} \rightarrow \normaa{u_\infty}.$ This, together with~(\ref{E:u_k conv weakly in H1}) yields
\begin{equation*}
u_{k_m} \longrightarrow u_\infty \quad \textrm{in} \quad H^1_0(\Omega).
\end{equation*}

Summarizing, we have proved the following
\begin{theorem}\label{T:limiting function}
There exist $\lambda_\infty\in\RR$ and $u_\infty\in\VV_\infty$ such that 
\begin{equation*}
\left\{
\begin{array}{l}
a(u_\infty,v)=\lambda_\infty \, b(u_\infty,v),\qquad \forall~v\in  \VV_\infty,\\
\normab{u_\infty}=1.
\end{array}
\right.
\end{equation*}
Moreover, $\D\lambda_\infty = \lim_{k\to\infty} \lambda_k$ and there exists a subsequence $\{u_{k_m}\}_{m\in\NN_0}$ of $\{u_k\}_{k\in\NN_0}$ such that
$$u_{k_m} \longrightarrow u_\infty \quad \textrm{in} \quad H^1_0(\Omega).$$
\end{theorem}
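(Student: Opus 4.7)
The statement is essentially a compactness argument combined with the usual trick that weak convergence plus convergence of norms yields strong convergence in a Hilbert space. Here is my plan.

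First I would observe that since each $\Tau_{k+1}$ is a conforming refinement of $\Tau_k$, the spaces are nested: $\VV_k \subset \VV_{k+1}$, so $\VV_\infty = \overline{\cup_k \VV_k}^{H^1_0(\Omega)}$ is a well-defined closed subspace of $H^1_0(\Omega)$. By the minimum-maximum principle applied in the nested discrete problems, $\{\lambda_k\}$ is monotone decreasing, and it is bounded below by the $j$-th continuous eigenvalue $\lambda_j>0$. Hence there is $\lambda_\infty \ge \lambda_j$ with $\lambda_k \searrow \lambda_\infty$.

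Next I would use $\normab{u_k}=1$ and $a(u_k,u_k)=\lambda_k b(u_k,u_k) = \lambda_k$ to conclude $\normaa{u_k}^2 = \lambda_k \to \lambda_\infty$, so $\{u_k\}$ is bounded in $H^1_0(\Omega)$. Extracting a weakly convergent subsequence $u_{k_m} \rightharpoonup u_\infty$ in $H^1_0(\Omega)$ with limit in the closed subspace $\VV_\infty$, and then using Rellich–Kondrachov to pass to a further (not-relabeled) subsequence with $u_{k_m}\to u_\infty$ strongly in $L^2(\Omega)$, handles compactness. The strong $L^2$ limit gives $\normab{u_\infty}=1$.

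To pass to the limit in the equation, I would fix any $k_0$ and any $v \in \VV_{k_0}$; for $k_m \ge k_0$ we have $v \in \VV_{k_m}$, so $a(u_{k_m},v)=\lambda_{k_m} b(u_{k_m},v)$. The left side converges to $a(u_\infty,v)$ by weak $H^1$ convergence, and the right side converges to $\lambda_\infty b(u_\infty,v)$ by strong $L^2$ convergence together with $\lambda_{k_m}\to\lambda_\infty$. Since $k_0$ and $v\in\VV_{k_0}$ are arbitrary, the identity holds on $\cup_k \VV_k$ and by density on $\VV_\infty$, giving the desired variational equation.

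The step I expect to require the most care is upgrading the weak convergence to strong convergence in $H^1_0(\Omega)$. For this I would use the limiting equation with $v=u_\infty$: $\normaa{u_\infty}^2 = \lambda_\infty b(u_\infty,u_\infty) = \lambda_\infty$. Combined with $\normaa{u_{k_m}}^2 = \lambda_{k_m} \to \lambda_\infty$, this gives $\normaa{u_{k_m}} \to \normaa{u_\infty}$. Since $\normaa{\cdot}$ is equivalent to the $H^1_0(\Omega)$-norm (which is a Hilbert norm), weak convergence together with convergence of norms implies strong convergence, completing the proof.
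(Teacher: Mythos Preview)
Your proposal is correct and follows essentially the same argument as the paper: monotonicity of $\{\lambda_k\}$ via the minimum--maximum principle, boundedness of $\{u_k\}$ from $\normaa{u_k}^2=\lambda_k$, weak $H^1_0$ compactness plus Rellich for strong $L^2$ convergence, passage to the limit in the discrete equations tested against $v\in\VV_{k_0}$, and finally the upgrade to strong $H^1_0$ convergence via weak convergence together with $\normaa{u_{k_m}}\to\normaa{u_\infty}$. The only cosmetic difference is that you make the density step to extend the variational identity to all of $\VV_\infty$ explicit, whereas the paper leaves it implicit.
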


\begin{remark}\label{R:subsubsequence}
 It is important to notice that from any subsequence $\{(\lambda_{k_m},u_{k_m})\}_{m\in\NN_0}$ of $\{(\lambda_k,u_k)\}_{k\in\NN_0}$, we can extract another subsequence $\{(\lambda_{k_{m_n}},u_{k_{m_n}})\}_{n\in\NN_0}$, such that $u_{k_{m_n}}$ converges in $H^1(\Omega)$ to some function $\tilde u_\infty\in\VV_\infty$ that satisfies
\begin{equation*}
\left\{
\begin{array}{l}
a(\tilde u_\infty,v)=\lambda_\infty \, b(\tilde u_\infty,v),\qquad \forall~v\in  \VV_\infty,\\
\normab{\tilde u_\infty}=1.
\end{array}
\right.
\end{equation*}
\end{remark}

\section{Convergence of estimators}\label{S:convest}

In this section we will prove that the global a posteriori estimator defined in Section~\ref{S:apost} tends to zero. We will follow the same steps as in~\cite{MSV_convergence} providing the proofs of the results that are problem dependent. Those geometrical results that are consequences of the fact that  we are only refining will be stated without proof, but with a precise reference to the result from~\cite{MSV_convergence} being used.

In order not to clutter the notation, we will still call  $\{u_k\}_{k\in\NN_0}$ to the subsequence $\{u_{k_m}\}_{m\in\NN_0}$, and $\{\Tau_k\}_{k\in\NN_0}$ to the sequence $\{\Tau_{k_m}\}_{m\in\NN_0}$. Also, we will replace the subscript $\Tau_k$ by $k$ (e.g.\ $\Nk(T)\definedas \NTk(T)$ and $\omega_k(T)\definedas \omega_{\Tau_k}(T)$), and 
 whenever $\Xi$ is a subset of $\Tau_k$, $\eta_k(\Xi)^2$ will denote the sum $\sum_{T\in \Xi}\eta_k(T)^2$.%

The main result of this section is the following
\begin{theorem}[Estimator's convergence]\label{T:est_conv}
If $\{\Tau_k\}_{k\in\NN_0}$ denote the triangulations corresponding to the convergent subsequence of discrete eigenpairs from Theorem~\ref{T:limiting function}, then
$$\lim_{k\rightarrow\infty} \eta_k(\Tau_k)=0.$$
\end{theorem}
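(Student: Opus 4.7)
The strategy mirrors that of~\cite{MSV_convergence}: partition each triangulation $\Tau_k$ into elements that stay unchanged in all subsequent refinements and elements that are eventually refined further, then show that the estimator contribution from each group vanishes in the limit.

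\textbf{Partition.} I would set $\Tau_k^0 \definedas \{T \in \Tau_k : T \in \Tau_m \text{ for all } m \ge k\}$ and $\Tau_k^+ \definedas \Tau_k \setminus \Tau_k^0$. Since \textsf{REFINE} subdivides every marked element, $\MM_k \subseteq \Tau_k^+$. A purely geometric lemma that depends only on the newest-vertex/Kossaczk\'y bisection rules and the monotonicity of the mesh sequence (borrowed from~\cite{MSV_convergence}) then yields $h_k^+ \definedas \max_{T \in \Tau_k^+} h_T \longrightarrow 0$ as $k \to \infty$.

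\textbf{Control of $\eta_k(\Tau_k^+)$.} For $T \in \Tau_k^+$ and $k$ sufficiently large, the $n_d$-bisection refinement $\Tau_*$ of $\Nk(T)$ is a sub-triangulation of some later $\Tau_m$, so $\VV_{\Tau_*} \subset \VV_m \subset \VV_\infty$ and Corollary~\ref{C:lowerbound} applies with $w = u_\infty$, $\mu = \lambda_\infty$, and $\WW = \VV_\infty$:
\[
\eta_k(T) \lesssim \|\nabla(u_\infty - u_k)\|_{\omega_k(T)} + h_T\,\lambda_\infty\,\|u_\infty\|_{\omega_k(T)} + h_T (1+\lambda_k)\,\|u_k\|_{H^1(\omega_k(T))}.
\]
Squaring, summing over $T \in \Tau_k^+$, and using the finite overlap of the patches $\omega_k(T)$ (from shape regularity) together with $h_T \le h_k^+$ gives
\[
\eta_k(\Tau_k^+)^2 \lesssim \|\nabla(u_\infty - u_k)\|_\Omega^2 + (h_k^+)^2 \bigl( \lambda_\infty^2\|u_\infty\|_\Omega^2 + (1+\lambda_k)^2 \|u_k\|_{H^1_0(\Omega)}^2 \bigr),
\]
which tends to zero by Theorem~\ref{T:limiting function} (recall the notational convention that $\{u_k\}$ already denotes the convergent subsequence) combined with $h_k^+ \to 0$.

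\textbf{Control of $\eta_k(\Tau_k^0)$.} From the marking condition~(\ref{E:marking}) and $\MM_k \subseteq \Tau_k^+$ one obtains $\max_{T \in \Tau_k^0} \eta_k(T) \lesssim \eta_k(\MM_k) \le \eta_k(\Tau_k^+) \to 0$, which is only uniform-in-$T$ decay. The main obstacle is upgrading this to decay of the sum $\sum_{T \in \Tau_k^0} \eta_k(T)^2$, because $|\Tau_k^0|$ may grow unboundedly with $k$. Following~\cite{MSV_convergence}, I would split $\Tau_k^0$ according to the level at which each element was created: for any fixed $m$, the elements of $\Tau_k^0$ already present in $\Tau_m$ form a fixed finite collection controlled by the uniform bound above, whereas the elements of $\Tau_k^0$ created after step $m$ originated from a prior refinement and therefore have diameters bounded by $h_\ell^+$ for some $\ell \ge m$, so the local lower bound argument from the $\Tau_k^+$ analysis can be re-applied. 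Carefully combining the two contributions, uniformly in $k$ and then letting $m \to \infty$, is the delicate technical point of the proof; this is where the analogue of the MSV machinery has to be adapted, the rest of the scheme being essentially a translation of the linear-problem argument using the eigenvalue-specific lower bound of Corollary~\ref{C:lowerbound}.
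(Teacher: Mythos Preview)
Your two-way split does not mesh with the hypotheses of Corollary~\ref{C:lowerbound}. Taking $\WW=\VV_\infty$ and $w=u_\infty$ there requires $\VV_{\Tau_*}\subset\VV_\infty$, where $\Tau_*$ is obtained by bisecting $n_d$ times \emph{every element of $\Nk(T)$}, not just $T$. Membership of $T$ in your $\Tau_k^+$ only says that $T$ itself is eventually bisected once; a neighbour of $T$ may lie in your $\Tau_k^0$ and never be refined, in which case the interior and edge bubble functions used in the proof of the lower bound need not belong to $\VV_\infty$, and the estimate with $w=u_\infty$ is simply unavailable for that $T$. Since the leading term $\normkNT{\nabla(w-u_k)}$ in the lower bound carries no factor of $h_T$, you cannot rescue this by switching to a continuous eigenpair $w=u$: the quantity $\|\nabla(u-u_k)\|_\Omega$ does not tend to zero. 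The same obstruction undermines the last step of your sketch for $\Tau_k^0$, where you propose to reuse ``the $\Tau_k^+$ analysis'' on small never-refined elements; those elements are, by definition, not refined even once, so $w=u_\infty$ is certainly inadmissible there.

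This is precisely why the paper (following~\cite{MSV_convergence}) uses a \emph{three}-way partition keyed to the behaviour of $\Nk(T)$ rather than of $T$ alone: a set where all neighbours are eventually refined at least $n_d$ times (so $w=u_\infty$ is admissible and the patch mesh-size tends to zero), a set where no neighbour is ever refined (handled via~\eqref{E:marking} and a dominated-convergence argument, using $w=u$), and an interface layer $\Tau_k^*$ on which one takes $w=u$ and exploits the geometric fact $|\Omega_k^*|\to 0$ to kill the non-vanishing gradient term. Your partition collapses this interface into the other two sets and also ignores the distinction between ``refined once'' and ``refined $n_d$ times'', losing exactly the information needed to invoke the lower bound with $w=u_\infty$. (Incidentally, your labels $\Tau_k^0$ and $\Tau_k^+$ are reversed relative to the paper's, which makes comparison awkward.)
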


In order to prove this theorem we consider the following decomposition of $\Tau_k$, which was first established in~\cite{MSV_convergence}.

\begin{definition}\label{D:splitting}
Given the sequence $\{\Tau_k\}_{k\in\NN_0}$ of triangulations, for each $k \in \NN_0$ we define the following (disjoint) subsets of $\Tau_k$.
\begin{itemize}
 \item $\Tau_k^0\definedas \{T\in\Tau_k : T'~ \textrm{is refined at least} ~n_d~ \textrm{times, for all}~ T'\in \Nk(T)\}$;
 \item $\Tau_k^+\definedas \{T\in\Tau_k : T'~ \textrm{is never refined, for all}~T'\in \Nk(T)\}$;
 \item $\Tau_k^*\definedas \Tau_k \setminus (\Tau_k^0\cup \Tau_k^+)$.
\end{itemize}
We also define the three (overlapping) regions in $\Omega$:
\begin{itemize}
 \item $\Omega_k^0\definedas \bigcup\limits_{T\in\Tau_k^0} \omega_k(T)$;
 \item $\Omega_k^+\definedas \bigcup\limits_{T\in\Tau_k^+} \omega_k(T)$;
 \item $\Omega_k^*\definedas \bigcup\limits_{T\in\Tau_k^*} \omega_k(T)$.
\end{itemize}
\end{definition}

We will prove that $\eta_k(\Tau_k^0)$, $\eta_k(\Tau_k^*)$ and $\eta_k(\Tau_k^+)$ tend to zero as $k$ tends to infinity in Theorems~\ref{T:est_conv_1}, \ref{T:est_conv_2} and \ref{T:est_conv_3}. Since $\eta_k(\Tau_k)^2 = \eta_k(\Tau_k^0)^2 + \eta_k(\Tau_k^+)^2 + \eta_k(\Tau_k^*)^2 $, Theorem~\ref{T:est_conv} will follow from these results.

\begin{definition}[Meshsize function] 
We define $h_k\in L^\infty(\Omega)$ as the piecewise constant function
$$h_k|_T\definedas|T|^{1/d},\qquad \forall~T\in\Tau_k.$$
\end{definition}

For almost every $x\in\Omega$ there holds that $h_k(x)$ is monotonically decreasing and bounded from below by $0$. Therefore,
$$h_\infty(x)\definedas\lim\limits_{k\rightarrow\infty} h_k(x)$$
is well-defined for almost every $x\in\Omega$ and defines a function in $L^\infty(\Omega)$. Moreover, the following result holds~\cite[Lemma 4.3 and Corollary 4.1]{MSV_convergence}.
\begin{lemma}\label{L:hktendstozero}
The sequence $\{h_k\}_{k\in\NN_0}$ converges to $h_\infty$ uniformly, i.e.,
$$\lim_{k\rightarrow\infty} \|h_k-h_\infty\|_{L^\infty(\Omega)}=0,$$
and if $\chi_{\Omega_k^0}$ denotes the characteristic function of $\Omega_k^0$ then
$$\lim_{k\rightarrow\infty} \|h_k \chi_{\Omega_k^0}\|_{L^\infty(\Omega)}=0.$$
\end{lemma}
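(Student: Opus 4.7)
The plan is to handle the two claims in sequence, with the second reducing to the first via a short geometric argument based on the definition of $\Tau_k^0$.

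\textbf{Part 1 (uniform convergence $\|h_k-h_\infty\|_{L^\infty(\Omega)}\to 0$).} The pointwise limit $h_\infty(x)=\lim_k h_k(x)$ is well defined because $\{h_k(x)\}$ is monotonically non-increasing and bounded below by $0$. To upgrade this to uniform convergence I would use the following combinatorial observation: for every $\epsilon>0$, the set
\[
 G_\epsilon \definedas \{T : T\in\Tau_k\text{ for some }k\in\NN_0\text{ and }h_T>\epsilon\}
\]
is finite. Indeed, each bisection multiplies $h_T=|T|^{1/d}$ by $2^{-1/d}$, so any $T\in G_\epsilon$ descending from some $T_0\in\Tau_0$ is obtained by at most $\lceil d\log_2(h_{T_0}/\epsilon)\rceil$ bisections; since $\Tau_0$ is finite, only finitely many candidates remain.

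Given $\epsilon>0$, each $T\in G_\epsilon$ either eventually disappears from the meshes (once it is refined) or persists forever in every subsequent $\Tau_j$. Hence there exists $K_\epsilon$ such that for all $k\geq K_\epsilon$ every $T\in G_\epsilon\cap\Tau_k$ belongs to $\Tau_j$ for every $j\geq k$. Now, for any $x\in\Omega$ and $k\geq K_\epsilon$, let $T_k(x)\in\Tau_k$ be the element containing $x$. If $h_{T_k(x)}\leq\epsilon$ then $h_k(x)-h_\infty(x)\leq h_k(x)\leq\epsilon$; otherwise $T_k(x)\in G_\epsilon$ and is never further refined, so $h_j(x)=h_{T_k(x)}$ for all $j\geq k$, giving $h_k(x)=h_\infty(x)$. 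In either case the difference is at most $\epsilon$, which yields $\|h_k-h_\infty\|_{L^\infty(\Omega)}\leq\epsilon$ for every $k\geq K_\epsilon$.

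\textbf{Part 2 ($\|h_k\chi_{\Omega_k^0}\|_{L^\infty(\Omega)}\to 0$).} This will follow from Part 1 through a pointwise estimate. If $x\in\Omega_k^0$, then by definition $x\in\omega_k(T)$ for some $T\in\Tau_k^0$, so $x\in T'$ for some $T'\in\Nk(T)$, and by the definition of $\Tau_k^0$ the element $T'$ is refined at least $n_d$ times in the tail sequence $\{\Tau_j\}_{j\geq k}$. Since each bisection halves the volume, at some level $k^*>k$ the element of $\Tau_{k^*}$ containing $x$ has size at most $2^{-n_d/d}h_{T'}=2^{-n_d/d}h_k(x)$. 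Monotonicity of $\{h_j(x)\}$ then gives $h_\infty(x)\leq h_{k^*}(x)\leq 2^{-n_d/d}h_k(x)$, and rearranging,
\[
 h_k(x) \;\leq\; \frac{1}{1-2^{-n_d/d}}\bigl(h_k(x)-h_\infty(x)\bigr)
 \;\leq\; \frac{\|h_k-h_\infty\|_{L^\infty(\Omega)}}{1-2^{-n_d/d}}.
\]
Taking the supremum over $x\in\Omega_k^0$ and invoking Part 1 closes the proof.

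\textbf{Main obstacle.} The heart of the argument is Part 1, where pointwise monotone convergence must be promoted to uniform convergence; the combinatorial finiteness of $G_\epsilon$ is the crucial input and encodes the particular structure of the newest-vertex / Kossaczk\'y bisection refinement. Part 2 is then a clean corollary: the definition of $\Tau_k^0$ is tailored precisely so that $h_k$ on $\Omega_k^0$ is forced to exceed the eventual meshsize by a fixed multiplicative factor, which is exactly what is needed for $\|h_k-h_\infty\|_{L^\infty(\Omega)}$ to control $\|h_k\chi_{\Omega_k^0}\|_{L^\infty(\Omega)}$.
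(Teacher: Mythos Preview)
Your argument is correct and matches the approach the paper indicates: the paper does not give a self-contained proof here but refers to \cite[Lemma~4.3 and Corollary~4.1]{MSV_convergence}, noting only that the result rests on the refinement-only structure and the contraction $h_{k+1}(x)\le 2^{-1/d}h_k(x)$ on refined elements. Your Part~1 (finiteness of $G_\epsilon$ forcing eventual stabilization of ``large'' elements) and Part~2 (the fixed contraction on $\Omega_k^0$ turning $\|h_k-h_\infty\|_{L^\infty}$ into a bound for $\|h_k\chi_{\Omega_k^0}\|_{L^\infty}$) are precisely the ingredients behind those cited results.
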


This lemma is a consequence of the fact that the sequence of triangulations is obtained by refinement only, and that every time an element $T \in \Tau_k$ is refined into $\Tau_{k+1}$, $h_{k+1}(x)\leq \left(\frac{1}{2}\right)^{1/d} h_k(x)$ for almost every $x\in T$. But it is otherwise independent of the marking strategy. The next result is also independent of the marking strategy, it is just a consequence of the fact that $u_k\rightarrow u_\infty$, the lower bound and the convergence of $\|h_k \chi_{\Omega_k^0}\|_{L^\infty(\Omega)}$ to zero.

\begin{theorem}[Estimator's convergence: First part]\label{T:est_conv_1}
If $\{\Tau_k\}_{k\in\NN_0}$ denote the triangulations corresponding to the convergent subsequence of discrete eigenpairs from Theorem~\ref{T:limiting function}, then the contribution of $\Tau_k^0$ to the estimator vanishes in the limit, i.e.,
$$\lim_{k\rightarrow\infty} \eta_k(\Tau_k^0)= 0.$$
\end{theorem}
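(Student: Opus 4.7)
The plan is to apply the discrete local lower bound of Corollary~\ref{C:lowerbound} on each $T \in \Tau_k^0$, taking as test pair $(\mu, w) = (\lambda_\infty, u_\infty)$ and as test space $\WW = \VV_\infty$. Theorem~\ref{T:limiting function} tells us that $(\lambda_\infty,u_\infty)$ satisfies the eigenvalue relation in $\VV_\infty$ with $\normab{u_\infty}=1$, so the hypotheses on $(\mu,w)$ are met. By the definition of $\Tau_k^0$, every element $T' \in \Nk(T)$ has been bisected at least $n_d$ times during the adaptive loop, so the local refinement $\Tau_*$ appearing in Theorem~\ref{T:general_lower_bound} is dominated by some later $\Tau_j$; hence $\VV_{\Tau_*} \subset \VV_j \subset \VV_\infty$, and all hypotheses of Corollary~\ref{C:lowerbound} are in force.

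Applying the corollary and squaring yields, for each $T \in \Tau_k^0$,
\[
\eta_k(T)^2 \lesssim \|\nabla(u_\infty - u_k)\|_{\omega_k(T)}^2 + h_T^2\,\lambda_\infty^2\,\|u_\infty\|_{\omega_k(T)}^2 + h_T^2\,(1+\lambda_k)^2\,\|u_k\|_{H^1(\omega_k(T))}^2.
\]
Next I would sum over $T \in \Tau_k^0$, exploiting the finite overlap of the patches $\{\omega_k(T)\}$ (a consequence of shape regularity) and bounding each $h_T$ on this set by $\|h_k\chi_{\Omega_k^0}\|_{L^\infty(\Omega)}$, to obtain
\[
\eta_k(\Tau_k^0)^2 \lesssim \|\nabla(u_\infty - u_k)\|_\Omega^2 + \|h_k\chi_{\Omega_k^0}\|_{L^\infty(\Omega)}^2\,\Bigl(\lambda_\infty^2\,\|u_\infty\|_\Omega^2 + (1+\lambda_k)^2\,\|u_k\|_{H^1(\Omega)}^2\Bigr).
\]

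To conclude, I would use Theorem~\ref{T:limiting function} to see that $u_k \to u_\infty$ in $H^1_0(\Omega)$, so the first term on the right vanishes as $k\to\infty$. Lemma~\ref{L:hktendstozero} gives $\|h_k\chi_{\Omega_k^0}\|_{L^\infty(\Omega)} \to 0$, while the identity $\normaa{u_k}^2=\lambda_k$ combined with the equivalence $\normaa{\cdot}\simeq\|\cdot\|_{H^1_0(\Omega)}$ shows that $\|u_k\|_{H^1(\Omega)}$ is uniformly bounded (since $\{\lambda_k\}$ converges), and similarly $\|u_\infty\|_\Omega$ is controlled by $\normab{u_\infty}=1$. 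The parenthesis on the right is therefore bounded, and the whole right-hand side tends to zero.

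The main obstacle is really the very first step: verifying that the combinatorial condition ``every $T'\in\Nk(T)$ is refined at least $n_d$ times'' actually produces a triangulation whose finite element space contains $\VV_{\Tau_*}$ (and hence sits inside $\VV_\infty$). This is where the specific choice of $n_d$—just enough bisections of a simplex to generate an interior node of each element and of each side—and the combinatorics of the newest-vertex/Kossaczk\'y bisection procedures must be invoked. Once that inclusion is granted, what remains is only the routine combination of Corollary~\ref{C:lowerbound}, shape regularity, and the convergences supplied by Theorem~\ref{T:limiting function} and Lemma~\ref{L:hktendstozero}.
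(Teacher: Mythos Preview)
Your proposal is correct and follows essentially the same route as the paper: apply Corollary~\ref{C:lowerbound} with $\WW=\VV_\infty$, $w=u_\infty$, $\mu=\lambda_\infty$, sum over $T\in\Tau_k^0$ using finite overlap, bound $h_T$ by $\|h_k\chi_{\Omega_k^0}\|_{L^\infty(\Omega)}$, and conclude via Theorem~\ref{T:limiting function} and Lemma~\ref{L:hktendstozero}. Your discussion of why $\VV_{\Tau_*}\subset\VV_\infty$ holds is in fact more careful than the paper's, which simply invokes the corollary without comment.
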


\begin{proof}
Using Corollary~\ref{C:lowerbound} with $\WW=\VV_\infty$, $w=u_\infty$, and $\mu=\lambda_\infty$ we have that
\begin{align*}
\eta_k(\Tau_k^0)^2 &= \sum_{T\in \Tau_k^0} \eta_k(T)^2\\
&\lesssim\sum_{T\in \Tau_k^0} \normkNT{\nabla(u_k-u_\infty)}^2+h_T^2 \normkNT{\lambda_\infty u_\infty}^2+h_T^2(1+\lambda_k)^2\|u_k\|_{H^1(\omega_k(T))}^2\\
&\lesssim\norm{\nabla(u_k-u_\infty)}^2+\|h_k \chi_{\Omega_k^0}\|_{L^\infty(\Omega)}^2\big(\norm{\lambda_\infty u_\infty}^2+(1+\lambda_k)^2\|u_k\|_{H^1(\Omega)}^2\big)
\end{align*}
Since $\lambda_k \rightarrow \lambda_\infty$ in $\RR$ and $u_k \rightarrow u_\infty$ in $H^1(\Omega)$, Lemma~\ref{L:hktendstozero} implies the claim.
\end{proof}

The following lemma was proved as the first step of the proof of~\cite[Proposition 4.2]{MSV_convergence}, it is also a consequence of the fact that the sequence of triangulations is obtained by refinement, without coarsening, and it is independent of the specific problem being considered.
\begin{lemma}\label{L:interfasetendstozero}
If $\Omega_k^*$ is as in Definition~\ref{D:splitting}, then
$$\lim_{k\rightarrow\infty}|\Omega_k^*|= 0.$$
\end{lemma}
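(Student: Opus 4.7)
The plan is to prove $|\Omega_k^*|\to 0$ by a purely geometric argument using only that $\{\Tau_k\}_{k\in\NN_0}$ is obtained by refinement (no coarsening) and the uniform convergence of the meshsize function $h_k$ established in Lemma~\ref{L:hktendstozero}. Concretely, I would show that for almost every $x\in\Omega$ the point $x$ belongs to $\Omega_k^*$ for only finitely many $k$; then $\chi_{\Omega_k^*}\to 0$ pointwise a.e., and since $0\le\chi_{\Omega_k^*}\le\chi_\Omega\in L^1(\Omega)$, dominated convergence will yield $|\Omega_k^*|=\int_\Omega\chi_{\Omega_k^*}\to 0$.

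To prove the pointwise statement I would split $\Omega$ according to the value of the limiting meshsize $h_\infty$. On $\{h_\infty=0\}$ the sequence $h_k(x)$ tends to zero, so by Lemma~\ref{L:hktendstozero} every patch $\omega_k(T)$ with $x\in\omega_k(T)$ shrinks; in particular every neighbor of such a $T$ is eventually bisected at least $n_d$ times, forcing $T\in\Tau_k^0$ and hence $x\notin\Omega_k^*$ for $k$ large. On $\{h_\infty>0\}$ the point $x$ eventually lies in an element $T^*$ that is never refined again, and it then remains to check that every element $T\in\Tau_k$ whose patch contains $x$ (equivalently, every neighbor of $T^*$ in $\Tau_k$) eventually belongs to $\Tau_k^+$, so that once more $x\notin\Omega_k^*$ for large $k$.

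The main obstacle is this last claim: one must rule out the scenario in which a persistent element $T^*$ remains adjacent to a strip where refinement continues indefinitely. This is where the fine structure of newest-vertex bisection in two dimensions and of Kossaczk\'y's procedure in three dimensions enters: conformity of the meshes prevents hanging nodes, so repeated bisections of a neighbor of $T^*$ along the shared side would force a compatible refinement on the $T^*$ side after a bounded number of steps. A careful combinatorial tracking of this propagation is exactly what is carried out in the first part of the proof of~\cite[Proposition~4.2]{MSV_convergence}, and I would import that argument here without modification, since it depends only on the refinement rules and not on the eigenvalue problem at hand.
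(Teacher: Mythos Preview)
Your approach coincides with the paper's: the paper gives no proof at all for this lemma and simply cites \cite[Proposition~4.2]{MSV_convergence}, noting that the result is purely geometric and independent of the eigenvalue problem. Since you ultimately import the same argument from the same reference, there is nothing to compare at the level of strategy.

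One correction to your sketch, however. You treat the case $h_\infty(x)=0$ as the easy half, claiming it follows from Lemma~\ref{L:hktendstozero}: ``every patch $\omega_k(T)$ with $x\in\omega_k(T)$ shrinks; in particular every neighbor of such a $T$ is eventually bisected at least $n_d$ times.'' Neither implication is justified by Lemma~\ref{L:hktendstozero} alone. That lemma only gives $h_k(x)\to 0$ at the point $x$; it says nothing about the size of vertex-neighbors of $T_k(x)$, which in a merely shape-regular conforming mesh may be much larger than $T_k(x)$. And even if the patch did shrink, a small element need not be bisected at all in the future, so it could still block membership in $\Tau_k^0$. Ruling out a persistent element touching the second ring of $T_k(x)$ for infinitely many $k$ requires exactly the same propagation-through-conformity argument (no hanging nodes under newest-vertex/Kossaczk\'y bisection) that you invoke for the case $h_\infty(x)>0$. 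In other words, both halves of your dichotomy lean on \cite[Proposition~4.2]{MSV_convergence}, not just the second one. Once you acknowledge that, your dominated-convergence framework is a perfectly sound way to organize the argument.
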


From Corollary~\ref{C:lowerbound}, Lemma~\ref{L:interfasetendstozero} and the fact that $u_k\rightarrow u_\infty$ en $H^1$ we obtain
\begin{theorem}[Estimator's convergence: Second part]\label{T:est_conv_2}
If $\{\Tau_k\}_{k\in\NN_0}$ denote the triangulations corresponding to the convergent subsequence of discrete eigenpairs from Theorem~\ref{T:limiting function}, then the contribution of $\Tau_k^*$ to the estimator vanishes in the limit, i.e.,
$$\lim_{k\rightarrow\infty}\eta_k(\Tau_k^*)= 0.$$
\end{theorem}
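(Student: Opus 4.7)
The plan is to apply Corollary~\ref{C:lowerbound} elementwise on $\Tau_k^*$ with the choice $\WW = \VV_\infty$, $w = u_\infty$, $\mu = \lambda_\infty$ (which is legitimate by Theorem~\ref{T:limiting function}, since $\VV_k \subset \VV_\infty$ yields $\VV_{\Tau_*} \subset \VV_\infty$ after the $n_d$ bisections), and then show that after summing over $T \in \Tau_k^*$ each of the three resulting terms tends to zero.

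First I would use the bounded overlap of the patches $\{\omega_k(T)\}_{T\in\Tau_k^*}$, which follows from shape regularity $\kappa < \infty$, to sum the inequality
\[
\eta_k(T)^2 \lesssim \normkNT{\nabla(u_k-u_\infty)}^2 + h_T^2\normkNT{\lambda_\infty u_\infty}^2 + h_T^2(1+\lambda_k)^2\|u_k\|_{H^1(\omega_k(T))}^2
\]
over $T \in \Tau_k^*$, obtaining
\[
\eta_k(\Tau_k^*)^2 \lesssim \|\nabla(u_k-u_\infty)\|_{\Omega}^2 + h_\Omega^2\lambda_\infty^2 \|u_\infty\|_{\Omega_k^*}^2 + h_\Omega^2(1+\lambda_k)^2 \|u_k\|_{H^1(\Omega_k^*)}^2,
\]
where $h_\Omega \definedas \diam(\Omega)$ is used only to bound the factors $h_T$.

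The first term tends to zero because $u_k \to u_\infty$ in $H^1_0(\Omega)$ by Theorem~\ref{T:limiting function}. For the second term, the sequence $\lambda_\infty$ is fixed and $u_\infty \in L^2(\Omega)$, so by the absolute continuity of the Lebesgue integral together with Lemma~\ref{L:interfasetendstozero} (which gives $|\Omega_k^*| \to 0$), we obtain $\|u_\infty\|_{\Omega_k^*} \to 0$. For the third term, $(1+\lambda_k)^2$ is bounded (since $\lambda_k \to \lambda_\infty$), and we split
\[
\|u_k\|_{H^1(\Omega_k^*)} \le \|u_k - u_\infty\|_{H^1(\Omega)} + \|u_\infty\|_{H^1(\Omega_k^*)};
\]
the first summand vanishes by the $H^1$ convergence, and the second by absolute continuity of the integral applied to the fixed function $u_\infty \in H^1_0(\Omega)$, once again invoking $|\Omega_k^*| \to 0$.

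The only delicate point is that the $L^2$/$H^1$-norms in the last two terms are evaluated over a \emph{moving} set $\Omega_k^*$ and, in the third term, of the \emph{moving} function $u_k$. The idea above—reducing everything to the fixed limiting function $u_\infty$ by a triangle inequality, and then invoking absolute continuity on the shrinking set—handles both difficulties, and combined with Theorem~\ref{T:limiting function} and Lemma~\ref{L:interfasetendstozero} it completes the proof.
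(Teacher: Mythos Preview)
Your application of Corollary~\ref{C:lowerbound} with $\WW=\VV_\infty$ is not justified for elements $T\in\Tau_k^*$, and this is precisely the point that distinguishes $\Tau_k^*$ from $\Tau_k^0$. The hypothesis of Theorem~\ref{T:general_lower_bound} (hence of Corollary~\ref{C:lowerbound}) requires $\VV_{\Tau_*}\subset\WW$, where $\Tau_*$ is obtained from $\Tau_k$ by bisecting each element of $\Nk(T)$ exactly $n_d$ times. For $T\in\Tau_k^0$ this inclusion holds because, by definition, every neighbor of $T$ is eventually bisected at least $n_d$ times along the adaptive sequence, so $\VV_{\Tau_*}\subset\VV_m$ for some $m$ and hence $\VV_{\Tau_*}\subset\VV_\infty$. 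For $T\in\Tau_k^*$, however, the definition guarantees that \emph{some} neighbor $T'\in\Nk(T)$ is bisected fewer than $n_d$ times (possibly never) in the whole sequence; the artificial $n_d$-fold bisection of $T'$ introduces finite element functions that do not belong to any $\VV_m$, and therefore not to $\VV_\infty=\overline{\cup_m\VV_m}$. Your claim ``$\VV_k\subset\VV_\infty$ yields $\VV_{\Tau_*}\subset\VV_\infty$'' is simply false: $\VV_\infty$ is not closed under arbitrary refinement, only under refinements that actually occur in the adaptive sequence.

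The fix is the one used in the paper: take $\WW=H^1_0(\Omega)$ and $(\mu,w)=(\lambda,u)$ any eigenpair of the continuous problem~(\ref{E:cont-problem}). Then $\VV_{\Tau_*}\subset H^1_0(\Omega)$ trivially, and the bound reads
\[
\eta_k(\Tau_k^*)^2 \lesssim \|\nabla(u_k-u)\|_{\Omega_k^*}^2+\lambda^2\|u\|_{\Omega_k^*}^2+(1+\lambda_k)^2\|u_k\|_{H^1(\Omega_k^*)}^2.
\]
Now one must also split the first term via $\|\nabla(u_k-u)\|_{\Omega_k^*}\le\|\nabla(u_k-u_\infty)\|_\Omega+\|\nabla(u_\infty-u)\|_{\Omega_k^*}$, and your absolute-continuity argument (applied to the fixed functions $u$ and $u_\infty$) together with Lemma~\ref{L:interfasetendstozero} and Theorem~\ref{T:limiting function} finishes the proof exactly as you outlined for the remaining terms.
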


\begin{proof}
Let $(\lambda, u)$ be any eigenpair of~(\ref{E:cont-problem}). Then Corollary~\ref{C:lowerbound} with $\WW=H^1_0(\Omega)$, $w=u$, and $\mu=\lambda$ implies that
\begin{align*}
\eta_k(\Tau_k^*)^2 &= \sum_{T\in \Tau_k^*} \eta_k(T)^2\\
&\lesssim\sum_{T\in \Tau_k^*} \normkNT{\nabla(u_k-u)}^2+h_T^2 \normkNT{\lambda u}^2+h_T^2(1+\lambda_k)^2\| u_k\|_{H^1(\omega_k(T))}^2\\
&\lesssim \|\nabla(u_k-u)\|_{\Omega_k^*}^2+\lambda^2\|u\|_{\Omega_k^*}^2+ (1+\lambda_k)^2\|u_k\|_{H^1(\Omega_k^*)}^2\\
&\lesssim \|\nabla(u_k-u_\infty)\|_{\Omega}^2+\|\nabla(u_\infty-u)\|_{\Omega_k^*}^2+\lambda^2\|u\|_{\Omega_k^*}^2\\
&\quad+(1+\lambda_k)^2\|u_k-u_\infty\|_{H^1(\Omega)}^2+(1+\lambda_k)^2\|u_\infty\|_{H^1(\Omega_k^*)}^2.
\end{align*}
Taking into account that $\lambda_k \to \lambda_\infty$ in $\RR$, $u_k \rightarrow u_\infty$ in $H^1(\Omega)$ and Lemma~\ref{L:interfasetendstozero}, the claim follows.
\end{proof}

In order to prove that the estimator contribution from $\Tau_k^+$ vanishes in the limit, we make the following 

\begin{definition}\label{D:Tau-plus}
Let $\Tau^+$ be the set of elements that are never refined, i.e.,
$$\Tau^+\definedas \bigcup_{k\geq 0}\bigcap_{m \geq k} \Tau_m,$$
and let the set $\Omega^+$ be defined as
$$\Omega^+\definedas \bigcup_{T\in\Tau^+} T.$$
\end{definition}

It is interesting to observe at this point that
\begin{lemma}\label{L:h0<=>omega+}
The set $\Omega^+$ is empty if and only if $\D \lim_{k\rightarrow\infty} \|h_k\|_{L^\infty(\Omega)}=0$.
\end{lemma}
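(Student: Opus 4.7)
The plan is to prove both implications separately, using the monotonicity $h_{k+1}(x) \le h_k(x)$ a.e.\ and the uniform convergence $\|h_k - h_\infty\|_{L^\infty(\Omega)} \to 0$ from Lemma~\ref{L:hktendstozero}.

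For the implication ``$\Omega^+ = \emptyset \Rightarrow \lim_k \|h_k\|_{L^\infty(\Omega)} = 0$'', I will argue by contrapositive on the pointwise limit. Fix $x \in \Omega$ outside a measure-zero set and suppose $h_\infty(x) > 0$. Since any refinement of the element containing $x$ shrinks $h_k(x)$ by at least the factor $(1/2)^{1/d}$, the element containing $x$ can be refined only finitely many times; hence there exists $k_0 = k_0(x)$ and $T_x \in \Tau_{k_0}$ with $x \in T_x$ and $T_x \in \Tau_m$ for every $m \ge k_0$. By Definition~\ref{D:Tau-plus} this means $T_x \in \Tau^+$, so $x \in \Omega^+$. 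Contrapositively, if $\Omega^+ = \emptyset$ then $h_\infty(x) = 0$ almost everywhere, so $\|h_\infty\|_{L^\infty(\Omega)} = 0$. The conclusion then follows from the triangle inequality and the uniform convergence in Lemma~\ref{L:hktendstozero}:
\[
\|h_k\|_{L^\infty(\Omega)} \le \|h_k - h_\infty\|_{L^\infty(\Omega)} + \|h_\infty\|_{L^\infty(\Omega)} \longrightarrow 0.
\]

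For the converse ``$\lim_k \|h_k\|_{L^\infty(\Omega)} = 0 \Rightarrow \Omega^+ = \emptyset$'', I will argue directly: if $\Omega^+$ were nonempty, there would exist $T \in \Tau^+$, meaning $T \in \Tau_m$ for all $m$ beyond some index $k_0$. Then $\|h_m\|_{L^\infty(\Omega)} \ge |T|^{1/d} > 0$ for every $m \ge k_0$, contradicting the hypothesis that $\|h_k\|_{L^\infty(\Omega)} \to 0$.

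There is no real obstacle here: the lemma is essentially a bookkeeping statement about the relationship between persistent elements and the decay of the meshsize function, and both directions reduce to the monotone behavior of $h_k$ under refinement together with Lemma~\ref{L:hktendstozero}. The only point demanding a touch of care is the first direction, where one needs to recognize that a strictly positive pointwise limit of $h_k(x)$ forces the element around $x$ to stabilize, thereby producing an element of $\Tau^+$.
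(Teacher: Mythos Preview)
Your proof is correct. The converse direction is identical to the paper's. For the forward implication the paper takes a slightly different route: it observes that if $\Omega^+=\emptyset$ then in fact $\Tau_k^+$ and $\Tau_k^*$ are empty for every $k$ (since every element, and hence every neighbor of every element, is eventually refined arbitrarily often), so $\Omega_k^0=\Omega$ and the \emph{second} assertion of Lemma~\ref{L:hktendstozero} gives $\|h_k\|_{L^\infty(\Omega)}=\|h_k\chi_{\Omega_k^0}\|_{L^\infty(\Omega)}\to 0$ directly. You instead argue pointwise that $h_\infty\equiv 0$ and then invoke the \emph{first} assertion of Lemma~\ref{L:hktendstozero} via the triangle inequality. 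Your route is a touch more self-contained, since it does not rely on the splitting of Definition~\ref{D:splitting}; the paper's route is shorter once that machinery is in place. Either way the content is the same bookkeeping about persistent elements versus decaying meshsize.
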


\begin{proof}
If $\Omega^+$ is empty, then $\Omega^+_k$ and $\Omega^*_k$ are empty for all $k\in\NN_0$, and $\|h_k\|_{L^\infty(\Omega)}=\|h_k\|_{L^\infty(\Omega^0_k)}$ which tends to zero by Lemma~\ref{L:hktendstozero}.
Conversely, if $\lim_{k\rightarrow\infty} \|h_k\|_{L^\infty(\Omega)}=0$, then $\Omega^+$ must be empty, otherwise there would exist $T\in\Tau^+$ and for all $k$ we would have $\|h_k\|_{L^\infty(\Omega)}\geq |T|^{1/d}$.
\end{proof}

This lemma, as Lemma~\ref{L:hktendstozero}, is just a geometric observation, and a consequence of the fact that the sequence of triangulations is shape regular and obtained by refinement, but it is independent of the particular problem being considered.

As an immediate consequence of Definition~\ref{D:Tau-plus} and Lemma 4.1 in \cite{MSV_convergence} we have that
$$\Tau^+= \bigcup_{k\geq 0} \Tau_k^+.$$

\begin{remark}
Theorems~\ref{T:est_conv_1} and~\ref{T:est_conv_2} hold independently of the marking strategy. In the next theorem, we will make use for the first time of the assumption~\eqref{E:marking} done on the module \textsf{MARK}.

\end{remark}

\begin{theorem}[Estimator's convergence: Third part]\label{T:est_conv_3}
If $\{\Tau_k\}_{k\in\NN_0}$ denote the triangulations corresponding to the convergent subsequence of discrete eigenpairs from Theorem~\ref{T:limiting function}, then the contribution of $\Tau_k^+$ to the estimator vanishes in the limit, i.e.,
$$\lim_{k\rightarrow\infty} \eta_k(\Tau_k^+)=0.$$
\end{theorem}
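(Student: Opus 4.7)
The strategy is to reduce the estimator sum over $\Tau_k^+$ to a quantity controlled by $\norm{\nabla(u_k-u_\infty)}^2$, which vanishes along the subsequence by Theorem~\ref{T:limiting function}. The leverage is that for every $T\in\Tau^+$ the patch $\omega_k(T)$ eventually becomes independent of $k$, and both $u_k|_{\omega_k(T)}$ and $u_\infty|_{\omega_k(T)}$ belong to a single finite-dimensional piecewise polynomial space; consequently $H^1$-convergence on such a patch is equivalent to convergence in any norm.

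First I would show that $\max_{T\in\Tau_k}\eta_k(T)\longrightarrow 0$. By the reasonable-marking assumption there exists $T_k^{\max}\in\MM_k$ attaining this maximum; since $T_k^{\max}$ is marked it is refined by \textsf{REFINE}, so $T_k^{\max}\notin\Tau^+$, and hence $T_k^{\max}\in\Tau_k^0\cup\Tau_k^*$, giving
$$\eta_k(T_k^{\max})^2 \le \eta_k(\Tau_k^0)^2+\eta_k(\Tau_k^*)^2 \longrightarrow 0$$
by Theorems~\ref{T:est_conv_1} and~\ref{T:est_conv_2}. For each fixed $T\in\Tau^+$ (and $k$ so large that $T\in\Tau_k^+$), the frozen-patch observation together with the equivalence of norms on the finite-dimensional polynomial space on $\omega_k(T)$ promotes the $H^1$-convergence $u_k\to u_\infty$ to convergence in every Sobolev norm. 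Hence $R(\lambda_k,u_k)\to R_\infty\definedas R(\lambda_\infty,u_\infty)$ in $L^2(T)$ and $J(u_k)\to J_\infty\definedas J(u_\infty)$ in $L^2(\partial T)$, which combined with $\eta_k(T)\le\eta_k(T_k^{\max})\to 0$ yields
$$h_T^2\normT{R_\infty}^2+h_T\normbT{J_\infty}^2=0;$$
that is, $R_\infty\equiv 0$ on every $T\in\Tau^+$ and $J_\infty\equiv 0$ on every interior side of $\Tau^+$.

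With this vanishing in hand, for $T\in\Tau_k^+$ I would write
$$\eta_k(T)^2=h_T^2\normT{R(\lambda_k,u_k)-R_\infty}^2+h_T\normbT{J(u_k)-J_\infty}^2$$
and, exploiting the polynomial structure of $u_k-u_\infty$ on every element of $\Nk(T)$ through inverse and trace inequalities (with constants uniform in $k$ by shape regularity), bound
$$h_T\normT{R(\lambda_k,u_k)-R_\infty}\lesssim \normT{\nabla(u_k-u_\infty)}+h_T\lambda_k\normT{u_k-u_\infty}+h_T|\lambda_k-\lambda_\infty|\normT{u_\infty},$$
$$h_T^{1/2}\normbT{J(u_k)-J_\infty}\lesssim \normkNT{\nabla(u_k-u_\infty)}.$$
Squaring, summing over $T\in\Tau_k^+$ using the bounded overlap of $\{\omega_k(T)\}$, and using $h_T\le \diam(\Omega)$ leaves
$$\eta_k(\Tau_k^+)^2\lesssim \norm{\nabla(u_k-u_\infty)}^2+\diam(\Omega)^2\lambda_k^2\norm{u_k-u_\infty}^2+\diam(\Omega)^2|\lambda_k-\lambda_\infty|^2\norm{u_\infty}^2,$$
whose right-hand side vanishes by Theorem~\ref{T:limiting function}.

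The hard part is the middle step: Corollary~\ref{C:lowerbound} cannot be applied with $\WW=\VV_\infty$, because the auxiliary refinement $\Tau_*$ there bisects inside the frozen region $\Omega^+$, producing bubble functions that lie in $\VV_{\Tau_*}$ but not in $\VV_\infty$. The workaround is precisely the maximum-estimator-is-marked trick above, which channels the already proved convergences $\eta_k(\Tau_k^0),\eta_k(\Tau_k^*)\to 0$ into the pointwise limit on each fixed $T\in\Tau^+$, forcing $R_\infty$ and $J_\infty$ to vanish there and setting up the stability estimate in the final paragraph.
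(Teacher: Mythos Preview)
Your argument is correct and takes a genuinely different route from the paper's proof. The paper also begins by showing $\eta_k(T)\to 0$ for each fixed $T\in\Tau^+$ via the marking assumption, but then it dominates $\eta_k(T)^2$ with Corollary~\ref{C:lowerbound} applied to a fixed continuous eigenpair $(\lambda,u)$ and invokes a generalized majorized convergence theorem to pass from pointwise convergence of $\epsilon_k|_T=|T|^{-1}\eta_k(T)^2$ to convergence of the integral $\int_{\Omega^+}\epsilon_k$. Your approach instead exploits the observation that $u_\infty$ is itself a piecewise polynomial on each frozen patch $\omega(T)$, so the equivalence of norms on $\PP_\ell$ promotes $H^1$-convergence to $H^2$-convergence and forces $R(\lambda_\infty,u_\infty)=0$, $J(u_\infty)=0$ there; you then subtract these vanishing quantities and bound $\eta_k(T)$ by a direct inverse/trace estimate in $u_k-u_\infty$, summable without any dominated-convergence machinery. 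Your route is more elementary, avoids the external majorized-convergence lemma, and yields the additional structural information that the limiting residual and jump vanish identically on $\Omega^+$. The paper's route is more modular, treating the lower bound as a black box and not relying on the polynomial structure of $u_\infty|_{\Omega^+}$. One small point worth making explicit in your write-up: the step ``for $k$ so large that $T\in\Tau_k^+$'' uses the identity $\Tau^+=\bigcup_{k\ge 0}\Tau_k^+$ stated after Definition~\ref{D:Tau-plus}; strictly speaking, for the final summation you only need the vanishing of $R_\infty$, $J_\infty$ on elements of $\Tau_k^+$, and for those the frozen-patch argument applies immediately since $\Tau_k^+\subset\Tau_m^+$ for all $m\ge k$.
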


\begin{proof}
Let $T\in\Tau^+$, then there exists $k_0$ such that $T\in \Tau_k$, for all $k\geq k_0$. Taking into account that all marked elements are at least refined once, we have that $T \notin \MM_k$. From assumption~\eqref{E:marking}, $\eta_k(T) \lesssim \eta_k(\MM_k)$. Since $\MM_k \subset \Tau_k^* \cup \Tau_k^0$, Theorems~\ref{T:est_conv_1} and~\ref{T:est_conv_2} imply that
\[
 \eta_k(T)^2 \lesssim \eta_k(\MM_k)^2 \le \eta_k(\Tau_k^*)^2 + \eta_k(\Tau_k^0)^2 \longrightarrow 0.
\]
We have thus proved that 
\begin{equation*}
\eta_k(T)\longrightarrow 0, \qquad\text{for all $T\in\Tau^+$.}
\end{equation*}

Now, we will prove that, moreover, 
$$\sum_{T\in \Tau_k^+} \eta_k(T)^2\longrightarrow 0.$$
To prove this, we resort to a generalized majorized convergence theorem. We first define 
\begin{equation*}
\epsilon_k|_T\definedas\frac{1}{|T|}\eta_k(T)^2, \quad\text{for all $T\in\Tau_k^+$,}\qquad\text{and}\quad \epsilon_k\definedas 0,\quad\text{otherwise}.
\end{equation*}
 Then $\sum_{T\in \Tau_k^+} \eta_k(T)^2 = \int_{\Omega} \epsilon_k(x) \, dx$, and $\epsilon_k(x) \to 0$ as $k\to\infty$ for almost every $x\in\Omega$.
It remains to prove that  $\int_{\Omega} \epsilon_k(x) \, dx \to 0$ as $k\to\infty$.

Let $k$ be fixed. Due to the definition of $\Tau_k^+$, for $T\in\Tau_k^+$ we have that $\omega_k(T)=\omega_j(T)$ for all $j\geq k$, and we can drop the subscript and call this set $\omega(T)$. Using Corollary~\ref{C:lowerbound} we have that if $(\lambda,u)$ is any fixed eigenpair of~(\ref{E:cont-problem}),
\begin{align*}
\eta_k(T)^2&\lesssim\|\nabla(u_k-u)\|_{\omega(T)}^2+\|\lambda u\|_{\omega(T)}^2 + (1+\lambda_k)^2\|u_k\|_{H^1(\omega(T))}^2\\
&\lesssim\|\nabla(u_k-u_\infty)\|_{\omega(T)}^2+\|\nabla u_\infty\|_{\omega(T)}^2+\|\nabla u\|_{\omega(T)}^2+\|\lambda u\|_{\omega(T)}^2 
\\
&\quad + (1+\lambda_0)^2\|u_k-u_\infty\|_{H^1(\omega(T))}^2+ (1+\lambda_0)^2\|u_\infty\|_{H^1(\omega(T))}^2\\
&\lesssim (1+\lambda_0)^2\left(\|u_k-u_\infty\|_{H^1(\omega(T))}^2 +c_T^2\right),
\end{align*}
where
$$c_T^2\definedas \|u_\infty\|_{H^1(\omega(T))}^2+\|\lambda u\|_{\omega(T)}^2+\|\nabla u\|_{\omega(T)}^2,$$
and fulfills
\begin{equation}\label{E:c_T}
\sum_{T\in\Tau_k^+} c_T^2\lesssim\|u_\infty\|_{H^1(\Omega)}^2+\norm{\lambda u}^2+\norm{\nabla u}^2<\infty.
\end{equation}

Let now $M_k$ be defined by
\begin{equation*}
M_k|_T\definedas\frac{C}{|T|}\big(\|u_k-u_\infty\|_{H^1(\omega(T))}^2 +c_T^2\big),
\quad\text{for all $T\in\Tau_k^+$,}\qquad\text{and}\quad M_k\definedas 0 ,\quad\text{otherwise},
\end{equation*}
where $C$ is chosen so that $0\leq\epsilon_k(x)\leq M_k(x)$, for all $x\in\Omega$. If we define
\begin{equation*}
M|_T\definedas C\frac{c_T^2}{|T|},
\quad\text{for all $T\in\Tau^+$,} \qquad \text{and} \quad M\definedas 0 , \quad\text{otherwise},
\end{equation*}
then
\begin{align*}
\int_{\Omega^+} |M_k(x)-M(x)|~dx &=\sum_{T\in\Tau^+\setminus \Tau^+_k} \int_T |M_k(x)-M(x)|~dx+\sum_{T\in\Tau^+_k} \int_T |M_k(x)-M(x)|~dx\\
&=\sum_{T\in\Tau^+\setminus \Tau^+_k} \int_T |M(x)|~dx+C\sum_{T\in\Tau^+_k} \|u_k-u_\infty\|_{H^1(\omega(T))}^2\\
&\lesssim C\sum_{T\in\Tau^+\setminus \Tau^+_k} c_T^2+C\|u_k-u_\infty\|_{H^1(\Omega)}^2.
\end{align*}
The terms in the right hand side tend to zero when $k$ tends to infinity, due to~(\ref{E:c_T}) and the fact that $u_k$ converges to $u_\infty$ in $H^1_0(\Omega)$. Therefore, 
$$M_k\longrightarrow M,\qquad \textrm{in}\quad L^1(\Omega^+).$$
Hence, using that $\epsilon_k(x)\rightarrow 0$, for almost every $x\in\Omega$, we can apply a generalized majorized convergence theorem \cite[p.1015]{Zeidler} to conclude that
$$
\eta_k(\Tau_k^+)^2 = \sum_{T\in \Tau_k^+} \eta_k(T)^2=\int_{\Omega^+} \epsilon_k(x)~dx \longrightarrow 0,
$$
as $k\rightarrow \infty$.
\end{proof}

We have proved in this section that $\eta_k(\Tau_k)\rightarrow 0$ as $k\rightarrow \infty$. In the next section we will use this result to conclude that $(\lambda_\infty, u_\infty)$ is an eigenpair of the continuous problem~(\ref{E:cont-problem}).

%
%
%
%

\section{The limiting pair is an eigenpair}\label{S:eigenfunction}

%
%
%
In this section we will prove that $(\lambda_\infty,u_\infty)$ is an eigenpair of the continuous problem~(\ref{E:cont-problem}). 
The idea in~\cite{MSV_convergence} to prove that $u_\infty$ is the exact solution to the continuous problem, consisted in using the \emph{reliability} of the a posteriori error estimators, that is, the fact that the error in energy norm is bounded (up to a constant) by the global error estimator. Such a bound does not hold in this case unless the underlying triangulation is sufficiently fine (see Theorem~\ref{T:upperbound}). We do not enforce such a condition on the initial triangulation $\Tau_0$, since the term \emph{sufficiently fine} is not easily quantifiable. Instead we resort to another idea, we will bound $a(u_\infty,v)-\lambda_\infty b(u_\infty,v)$ by the residuals of the discrete problems, which are in turn bounded by the estimators, and were proved to converge to zero in the previous section.

\begin{theorem}\label{T:eigenfunction}
The limiting pair $(\lambda_\infty,u_\infty)$ of Theorem~\ref{T:limiting function} is an eigenpair of the continuous problem~\eqref{E:cont-problem}. That is,
\begin{equation*}
\left\{
\begin{array}{l}
a(u_\infty,v)=\lambda_\infty \, b(u_\infty,v),\qquad \forall~v\in  H^1_0(\Omega),\\
\normab{u_\infty}=1.
\end{array}
\right.
\end{equation*}
\end{theorem}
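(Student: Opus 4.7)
The normalization $\normab{u_\infty}=1$ is already provided by Theorem~\ref{T:limiting function}, so the only thing to establish is the variational identity $a(u_\infty,v)=\lambda_\infty b(u_\infty,v)$ for every $v\in H^1_0(\Omega)$ (not just $v\in\VV_\infty$, which is what we have so far). The strategy, as the authors hint, is to control the continuous residual $a(u_\infty,v)-\lambda_\infty b(u_\infty,v)$ by the discrete residuals $a(u_k,\cdot)-\lambda_k b(u_k,\cdot)$, and then bound those residuals by the global a posteriori estimator $\eta_k(\Tau_k)$, which by Theorem~\ref{T:est_conv} converges to zero along the selected subsequence.

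The first step is to split, for any fixed $v\in H^1_0(\Omega)$,
\[
a(u_\infty,v)-\lambda_\infty b(u_\infty,v)
=\bigl[a(u_\infty-u_k,v)-\lambda_\infty b(u_\infty-u_k,v)\bigr]
+(\lambda_k-\lambda_\infty)b(u_k,v)
+\bigl[a(u_k,v)-\lambda_k b(u_k,v)\bigr].
\]
The first bracket tends to zero because $u_k\to u_\infty$ in $H^1_0(\Omega)$ along the subsequence by Theorem~\ref{T:limiting function}, and the middle term tends to zero because $\lambda_k\to\lambda_\infty$ and $\|u_k\|_b=1$. It remains to show that the discrete residual in the last bracket tends to zero.

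For the second step, fix a quasi-interpolation operator $I_k\colon H^1_0(\Omega)\to\VV_k$ of Cl\'ement type, which satisfies the standard local estimates
\[
\|v-I_kv\|_T\lesssim h_T\,\|v\|_{H^1(\omega_k(T))},\qquad
\|v-I_kv\|_{\partial T}\lesssim h_T^{1/2}\,\|v\|_{H^1(\omega_k(T))}.
\]
Because $I_kv\in\VV_k$, the discrete problem~(\ref{E:kdisc-problem}) gives $a(u_k,I_kv)=\lambda_k b(u_k,I_kv)$. Applying element-wise integration by parts, exactly as in the (omitted) Lemma for the upper bound in Section~\ref{S:apost},
\[
a(u_k,v)-\lambda_k b(u_k,v)
=a(u_k,v-I_kv)-\lambda_k b(u_k,v-I_kv)
=\sum_{T\in\Tau_k}\!\int_T R(\lambda_k,u_k)(v-I_kv)
-\sum_{S\in\SSS_k}\!\int_S J(u_k)(v-I_kv).
\]
Combining Cauchy--Schwarz with the Cl\'ement estimates and the finite overlap of the patches $\omega_k(T)$ yields
\[
\bigl|a(u_k,v)-\lambda_k b(u_k,v)\bigr|
\lesssim \Bigl(\sum_{T\in\Tau_k}\bigl[h_T^2\|R(\lambda_k,u_k)\|_T^2+h_T\|J(u_k)\|_{\partial T}^2\bigr]\Bigr)^{\!1/2}\|v\|_{H^1(\Omega)}
=\eta_k(\Tau_k)\,\|v\|_{H^1(\Omega)}.
\]

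Putting the pieces together, along the convergent subsequence from Theorem~\ref{T:limiting function}, Theorem~\ref{T:est_conv} gives $\eta_k(\Tau_k)\to 0$, whence $a(u_k,v)-\lambda_k b(u_k,v)\to 0$. Sending $k\to\infty$ in the displayed splitting above yields $a(u_\infty,v)=\lambda_\infty b(u_\infty,v)$ for every $v\in H^1_0(\Omega)$, completing the proof. I do not anticipate any serious obstacle: the deep work (subsequence convergence in $H^1$ and estimator convergence) is already done; the remaining argument is essentially the standard residual-plus-interpolation identity used in a posteriori upper bounds, but invoked here only to transfer the Galerkin-type identity from $\VV_\infty$ to $H^1_0(\Omega)$. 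The only mild subtlety is to ensure that the same subsequence is used in Theorems~\ref{T:limiting function} and~\ref{T:est_conv}, which is already built into the statement of the latter.
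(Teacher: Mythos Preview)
Your proof is correct and follows essentially the same approach as the paper's: an algebraic splitting that isolates the discrete residual, then the standard elementwise integration-by-parts plus quasi-interpolation bound to control that residual by $\eta_k(\Tau_k)\|v\|_{H^1(\Omega)}$. The only cosmetic differences are that the paper uses the Scott--Zhang interpolant instead of Cl\'ement and arranges the three leftover terms slightly differently (combining the $b$-contributions into $b(\lambda_k u_k-\lambda_\infty u_\infty,v)$ rather than your $-\lambda_\infty b(u_\infty-u_k,v)+(\lambda_k-\lambda_\infty)b(u_k,v)$).
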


\begin{proof}
We know that $\normab{u_\infty}=1$ due to Theorem~\ref{T:limiting function}. It remains to prove that
$$a(u_\infty,v)=\lambda_\infty b(u_\infty,v),\qquad \forall~v\in  H^1_0(\Omega).$$
Let $v\in  H^1_0(\Omega)$, and let $v_k\in \VV_k$ be the Scott-Zhang interpolant~\cite{Scott-Zhang},\cite{Scott-Zhang92} of $v$, which satisfies
$$\normT{v-v_k}\lesssim h_T \|\nabla v\|_{\omega_k(T)}\qquad \textrm{and}\qquad \normbT{v-v_k}\lesssim h_T^{1/2} \|\nabla v\|_{\omega_k(T)}.$$
From (\ref{E:kdisc-problem}) we have
$$a(u_k,v_k)=\lambda_k b(u_k,v_k),$$
for all $k$, and then
\begin{align}
\notag
  |a(u_\infty,v)-&\lambda_\infty b(u_\infty,v)|= |a(u_\infty,v)-\lambda_\infty b(u_\infty,v)-a(u_k,v_k)+\lambda_k b(u_k,v_k)| 
\\
\notag
  &= |a(u_k,v-v_k)-\lambda_k b(u_k,v-v_k)+b(\lambda_k u_k-\lambda_\infty u_\infty,v)+a(u_\infty-u_k,v)| \\
\label{E:to-be-bounded}
  &\le |a(u_k,v-v_k)-\lambda_k b(u_k,v-v_k)|+|b(\lambda_k u_k-\lambda_\infty u_\infty,v)|+|a(u_\infty-u_k,v)|.
\end{align}

The second term in~\eqref{E:to-be-bounded} can be bounded as 
\begin{align*}
|b(\lambda_k u_k-\lambda_\infty u_\infty,v)|&=|\lambda_k b(u_k- u_\infty,v)+(\lambda_k-\lambda_\infty)b(u_\infty,v)|\\
&\leq |\lambda_k| |b(u_k- u_\infty,v)|+|\lambda_k-\lambda_\infty||b(u_\infty,v)|\\
&\lesssim \lambda_0 \norm{u_k- u_\infty}\norm{v}+|\lambda_k-\lambda_\infty|\norm{u_\infty}\norm{v}\\
&\lesssim\left(\lambda_0 \norm{u_k- u_\infty}+|\lambda_k-\lambda_\infty| \norm{u_\infty}\right)\norm{v}.
\end{align*}

And the third term in~\eqref{E:to-be-bounded} is bounded by 
$$|a(u_\infty-u_k,v)| \lesssim \norm{\nabla (u_\infty-u_k)}\norm{\nabla v}.$$

Finally, the first term in~\eqref{E:to-be-bounded} can be bounded following the steps of the proof of the a posteriori upper bound, as follows:
\begin{align*}
|a(u_k,v-v_k)-\lambda_k b(u_k,v-v_k)|&=\left|\sum_{T\in\Tau_k} \int_T \AAA \nabla u_k\cdot \nabla (v-v_k) -\lambda_k \int_T \BB u_k(v-v_k)\right|\\
&=\left|\sum_{T\in\Tau_k} \int_T \big(-\nabla\cdot (\AAA \nabla u_k)-\lambda_k\BB u_k\big)(v-v_k) +\int_{\partial T} (v-v_k)\AAA\nabla u_k\cdot \overrightarrow{n}\right|\\
&=\left|\sum_{T\in\Tau_k} \int_T R(\lambda_k,u_k)(v-v_k) +\frac{1}{2}\int_{\partial T} (v-v_k)J(u_k)\right|,
\end{align*}
with $R(\lambda_k,u_k)$ and $J(u_k)$ as defined in~(\ref{E:element-residual}) and~(\ref{E:jump-residual}). Now, by H\"older and Cauchy-Schwarz inequalities we obtain
\begin{align*}
|a(u_k,v-v_k)-\lambda_k b(u_k,v-v_k)|&\leq\sum_{T\in\Tau_k}\normT{R(\lambda_k,u_k)}\normT{v-v_k}+\normbT{J(u_k)}\normbT{v-v_k}\\
&\lesssim\sum_{T\in\Tau_k}\normT{R(\lambda_k,u_k)}h_T\|\nabla v\|_{\omega_k(T)}+\normbT{J(u_k)}h_T^{1/2}\|\nabla v\|_{\omega_k(T)}\\
&\lesssim \left(\sum_{T\in\Tau_k}h_T^2\normT{R(\lambda_k,u_k)}^2+h_T\normbT{J(u_k)}^2\right)^{1/2}\norm{\nabla v}\\
&=\eta_k(\Tau_k)\norm{\nabla v}.
\end{align*}

Summarizing, we have that
\[
 |a(u_\infty,v)-\lambda_\infty b(u_\infty,v)|
\lesssim \left((1+\lambda_0)\left\|u_k- u_\infty\right\|_{H^1(\Omega)}+|\lambda_k-\lambda_\infty|\norm{u_\infty}+ 
\eta_k(\Tau_k)\right) \left\|v\right\|_{H^1(\Omega)}.
\]
Using the convergence of $u_k$ to $u_\infty$ in $H^1(\Omega)$ and $\lambda_k$ to $\lambda_\infty$ in $\RR$ from Theorem~\ref{T:limiting function}, and the convergence of the global estimator to zero from Theorem~\ref{T:est_conv}, we conclude that
$$|a(u_\infty,v)-\lambda_\infty b(u_\infty,v)|=0,$$
and the proof is completed.
\end{proof}

\section{Main result and concluding remarks}\label{S:main-result}

We conclude this article by stating and proving our main result, which is a consequence of the results in the previous sections, and discussing its strengths and weaknesses.

\begin{theorem}\label{T:cuasi main result}
Let $\{(\lambda_k,u_k)\}_{k\in\NN_0}$ denote the whole sequence of discrete eigenpairs obtained through the adaptive loop stated in Section~\ref{S:adloop}. Then, there exists an eigenvalue $\lambda$ of the continuous problem~(\ref{E:cont-problem}) such that
\begin{equation*}
\lim_{k\rightarrow\infty} \lambda_k=\lambda \qquad \textrm{and}\qquad \lim_{k\rightarrow\infty} \dist_{H^1_0(\Omega)}(u_k,M(\lambda)) = 0.
\end{equation*}
\end{theorem}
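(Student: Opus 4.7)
The plan is to combine the three pillars already established: Theorem~\ref{T:limiting function} (full-sequence convergence of $\lambda_k$ and subsequential convergence of $u_k$), Theorem~\ref{T:est_conv} (vanishing estimator along the subsequence), and Theorem~\ref{T:eigenfunction} (the limiting pair is an eigenpair). The convergence of the eigenvalues is essentially free from Theorem~\ref{T:limiting function}, which asserts that the \emph{full} sequence $\{\lambda_k\}$ decreases monotonically to some $\lambda_\infty$. Picking $u_\infty$ as the subsequential $H^1_0$-limit provided there, Theorem~\ref{T:eigenfunction} then says that $(\lambda_\infty,u_\infty)$ solves the continuous problem; in particular $\lambda_\infty$ is an eigenvalue. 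One thus defines $\lambda\definedas\lambda_\infty$, and the first conclusion $\lambda_k\to\lambda$ is immediate.

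For the second conclusion, the natural approach is a subsequence extraction argument. I would argue by contradiction: suppose $\dist_{H^1_0(\Omega)}(u_k,M(\lambda))\not\to 0$. Then some $\varepsilon>0$ and some subsequence $\{u_{k_m}\}$ satisfy $\dist_{H^1_0(\Omega)}(u_{k_m},M(\lambda))\ge\varepsilon$ for all $m$. By Remark~\ref{R:subsubsequence} (whose proof is the same weak-compactness + Rellich chain used for Theorem~\ref{T:limiting function}, applied to any subsequence), one can extract a further subsequence $\{u_{k_{m_n}}\}$ converging in $H^1_0(\Omega)$ to some $\tilde u_\infty\in\VV_\infty$ with $\normab{\tilde u_\infty}=1$ and $a(\tilde u_\infty,v)=\lambda_\infty b(\tilde u_\infty,v)$ for all $v\in\VV_\infty$.

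The crucial observation is that Theorems~\ref{T:est_conv} and~\ref{T:eigenfunction} apply equally well to this sub-subsequence: the proof of estimator convergence only used that $\lambda_k\to\lambda_\infty$ and $u_k\to u_\infty$ in $H^1_0(\Omega)$ along the selected subsequence, together with the refinement structure, so it carries over verbatim replacing $u_\infty$ by $\tilde u_\infty$; likewise the bound on $|a(\tilde u_\infty,v)-\lambda_\infty b(\tilde u_\infty,v)|$ by residuals yields that $\tilde u_\infty$ is an eigenfunction of the continuous problem with eigenvalue $\lambda_\infty=\lambda$. Hence $\tilde u_\infty\in M(\lambda)$, so
\[
\dist_{H^1_0(\Omega)}(u_{k_{m_n}},M(\lambda))\le \|u_{k_{m_n}}-\tilde u_\infty\|_{H^1_0(\Omega)}\longrightarrow 0,
\]
contradicting $\dist_{H^1_0(\Omega)}(u_{k_{m_n}},M(\lambda))\ge\varepsilon$. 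This contradiction forces $\dist_{H^1_0(\Omega)}(u_k,M(\lambda))\to 0$ for the full sequence.

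The only delicate point, and the main obstacle in writing this out carefully, is justifying that the entire chain Theorem~\ref{T:limiting function} $\Rightarrow$ Theorem~\ref{T:est_conv} $\Rightarrow$ Theorem~\ref{T:eigenfunction} is genuinely invariant under passage to arbitrary subsequences of $\{(\lambda_k,u_k)\}$. This requires checking that none of the ingredients used along the way (the decomposition $\Tau_k=\Tau_k^0\cup\Tau_k^*\cup\Tau_k^+$, the vanishing of $\|h_k\chi_{\Omega_k^0}\|_\infty$, the marking condition~\eqref{E:marking}, and the Scott--Zhang-based residual bound) depends on selecting a particular subsequence; they are all geometric properties of the refinement process or bounds that are uniform along the adaptive loop. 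With that remark, the subsequence-of-subsequences trick converts the subsequential convergence provided by Remark~\ref{R:subsubsequence} into full-sequence convergence of $\dist_{H^1_0(\Omega)}(u_k,M(\lambda))$ to zero, completing the proof.
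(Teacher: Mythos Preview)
Your proof is correct and follows essentially the same route as the paper's own argument: define $\lambda\definedas\lambda_\infty$ from Theorem~\ref{T:limiting function}, invoke Theorem~\ref{T:eigenfunction} to see that $\lambda$ is an eigenvalue, and then run the subsequence-of-subsequences contradiction using Remark~\ref{R:subsubsequence} together with the observation that Sections~\ref{S:convest} and~\ref{S:eigenfunction} apply verbatim to any convergent subsequence. Your additional paragraph verifying that the estimator-convergence machinery is subsequence-invariant is a welcome clarification, but otherwise the two proofs coincide.
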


\begin{proof}
By Theorem~\ref{T:limiting function}, taking $\lambda\definedas \lambda_\infty$, we have that $\lim_{k\rightarrow\infty} \lambda_k=\lambda,$
and by Theorem~\ref{T:eigenfunction}, $\lambda$ is an eigenvalue of the continuous problem~\eqref{E:cont-problem}. In order to prove that $\D \lim_{k\rightarrow\infty} \dist_{H^1_0(\Omega)}(u_k,M(\lambda)) = 0$ we argue by contradiction. If the result were not true, then there would exist a number $\epsilon>0$ and a subsequence $\{u_{k_m}\}_{m\in\NN_0}$ of $\{u_k\}_{k\in\NN_0}$ such that
\begin{equation}\label{E:Absurd}
\dist_{H^1_0(\Omega)}(u_{k_m},M(\lambda))>\epsilon,\qquad \forall~m\in\NN_0.
\end{equation}
By Remark~\ref{R:subsubsequence} it is possible to extract a subsequence of $\{u_{k_m}\}_{m\in\NN_0}$ which still converges to some function $\tilde u_\infty \in \VV_\infty$. By the arguments of Sections~\ref{S:convest} and~\ref{S:eigenfunction}, $\tilde u_\infty$ is an eigenfunction of the continuous problem~\eqref{E:cont-problem} corresponding to the same eigenvalue $\lambda$. That is, a subsequence of $\{u_{k_m}\}_{m\in\NN_0}$ converges to an eigenfunction in $M(\lambda)$, this contradicts~(\ref{E:Absurd}) and completes the proof.
\end{proof}

\begin{remark}
We have proved that the discrete eigenvalues converge to an eigenvalue of the continuous problem, and the discrete eigenfunctions converge to the set of the corresponding continuous eigenfunctions, and this is the main result of this article. But there is still an open question: If $\lambda_k$ was chosen as the $j$-th eigenvalue of the discrete problem over $\Tau_k$, is it true that $\{\lambda_k\}_{k\in\NN_0}$ converges to the $j$-th eigenvalue of the continuous problem? The answer is affirmative for a large number of problems, but not necessarily for all. There could be some pathological cases in which looking for the $j$-th eigenvalue we converge to one that is larger.
\end{remark}

 We now state an assumption on problem~(\ref{E:cont-problem}) that we will prove to be sufficient to guarantee that the convergence holds to the desired eigenvalue/eigenfunction. More precise sufficient conditions on problem data $\AAA$ and $\BB$ to guarantee that this assumption holds will be stated below.

\begin{assumptions}[Non-Degeneracy Assumption]\label{A:non-deg}
We will say that problem~(\ref{E:cont-problem}) satisfies the \emph{Non-Degeneracy Assumption} if whenever $u$ is an eigenfunction of~(\ref{E:cont-problem}), there is no nonempty open subset $\OO$ of $\Omega$ such that $u|_\OO\in\PP_\ell(\OO)$.
\end{assumptions}

\begin{theorem}\label{T:main-result}
Let us suppose that the continuous problem~(\ref{E:cont-problem}) satisfies the Non-Degeneracy Assumption~\ref{A:non-deg}, and let $\{(\lambda_k,u_k)\}_{k\in\NN_0}$ denote the whole sequence of discrete eigenpairs obtained through the adaptive loop stated in Section~\ref{S:adloop} and $\lambda$ denote the $j$-th eigenvalue of the continuous problem~(\ref{E:cont-problem}). Then,
\begin{equation*}
\lim_{k\rightarrow\infty} \lambda_k=\lambda \qquad \textrm{and}\qquad \lim_{k\rightarrow\infty} \dist_{H^1_0(\Omega)}(u_k,M(\lambda)) = 0.
\end{equation*}
\end{theorem}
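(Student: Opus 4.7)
The plan is to reduce the statement to Theorem~\ref{T:cuasi main result} by showing that, under the Non-Degeneracy Assumption, the meshsize necessarily tends to zero uniformly. Once $h_{\Tau_k}\to 0$ in $L^\infty(\Omega)$, the a priori convergence~\eqref{E:lambdak converges} of the $j$-th discrete eigenvalue to the $j$-th continuous eigenvalue identifies the limit $\lambda_\infty$ furnished by Theorem~\ref{T:cuasi main result}, and the conclusion on $\dist_{H^1_0(\Omega)}(u_k,M(\lambda))$ is immediate from that theorem with $\lambda=\lambda_j$.

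First I would invoke Theorem~\ref{T:cuasi main result} to obtain that the full sequence $\lambda_k$ converges to an eigenvalue $\lambda_\infty$ of~\eqref{E:cont-problem}, and that $\dist_{H^1_0(\Omega)}(u_k,M(\lambda_\infty))\to 0$. The minimum-maximum principle applied to each $\Tau_k$ immediately gives $\lambda_\infty \ge \lambda_j$, so it suffices to prove $\lambda_\infty \le \lambda_j$. For this, by Lemma~\ref{L:h0<=>omega+} it is equivalent to show that $\Omega^+=\emptyset$: once $\Omega^+=\emptyset$ we have $\|h_{\Tau_k}\|_{L^\infty(\Omega)}\to 0$, and~\eqref{E:lambdak converges} yields $\lambda_{j,\Tau_k}\to\lambda_j$, forcing $\lambda_\infty=\lambda_j$.

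Next I would prove $\Omega^+=\emptyset$ by contradiction. Suppose there exists $T\in\Tau^+$. By Theorem~\ref{T:limiting function} we can extract a subsequence $u_{k_m}\to u_\infty$ in $H^1_0(\Omega)$, and by Theorem~\ref{T:eigenfunction} the pair $(\lambda_\infty,u_\infty)$ is an eigenpair of~\eqref{E:cont-problem}; in particular $\normab{u_\infty}=1$, so $u_\infty$ is a genuine eigenfunction. By the definition of $\Tau^+$, for all sufficiently large $m$ we have $T\in\Tau_{k_m}$ and hence $u_{k_m}|_T\in\PP_\ell(T)$. Since $u_{k_m}\to u_\infty$ in $H^1(T)$ and $\PP_\ell(T)$ is a finite-dimensional, hence closed, subspace of $H^1(T)$, we conclude $u_\infty|_T\in\PP_\ell(T)$. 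But $T$ is a nonempty open subset of $\Omega$, so this contradicts the Non-Degeneracy Assumption~\ref{A:non-deg}. Therefore $\Omega^+=\emptyset$ and the previous paragraph gives $\lambda_\infty=\lambda_j$.

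The main obstacle in my view is the bridge between the purely combinatorial statement ``some element is never refined'' and the analytic Non-Degeneracy Assumption; the bridge is provided precisely by the fact that $u_k$ is polynomial of degree $\le\ell$ on every element of $\Tau_k$, so stability of an element passes polynomial structure to the $H^1$-limit $u_\infty$. All remaining ingredients—the monotonicity of $\lambda_{j,\Tau_k}$, the a priori bound~\eqref{E:lambdak converges}, and the identification of the limit pair as an eigenpair—are already in place from the previous sections, so once the Non-Degeneracy Assumption is used to rule out stable elements, the conclusion follows by reading off Theorem~\ref{T:cuasi main result} with $\lambda$ replaced by $\lambda_j$.
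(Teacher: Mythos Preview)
Your proposal is correct and follows essentially the same approach as the paper: invoke Theorem~\ref{T:cuasi main result}, then use the Non-Degeneracy Assumption to rule out $\Omega^+\neq\emptyset$ (via the closedness of $\PP_\ell(T)$ and Theorem~\ref{T:eigenfunction}), conclude $\|h_k\|_{L^\infty(\Omega)}\to 0$ by Lemma~\ref{L:h0<=>omega+}, and finish with~\eqref{E:lambdak converges}. The only cosmetic difference is that the paper isolates the step ``Non-Degeneracy $\Rightarrow h_k\to 0$'' as a separate lemma (Lemma~\ref{L:nondeg->h0}), whereas you inline it; also, your detour through $\lambda_\infty\ge\lambda_j$ is harmless but unnecessary, since~\eqref{E:lambdak converges} gives $\lambda_\infty=\lambda_j$ directly.
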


Before embarking into the proof of this theorem, it is worth mentioning that the model case of $\AAA \equiv I$ and $\BB \equiv 1$ satisfies Assumption~\ref{A:non-deg}, due to the fact that the eigenfunctions of the laplacian are analytic. A weaker assumption on the coefficients $\AAA$ and $\BB$ that guarantee non-degeneracy of the problem are given in the following

\begin{lemma}
If $\AAA$ is continuous, and piecewise $\PP_1$, and $\BB$ is piecewise constant, then problem~(\ref{E:cont-problem}) satisfies the Non-Degeneracy Assumption~\ref{A:non-deg}.
\end{lemma}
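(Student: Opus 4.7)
The plan is to argue by contradiction: assume $u$ is an eigenfunction with $u|_\OO = p \in \PP_\ell(\OO)$ on some nonempty open $\OO \subset \Omega$, and deduce $u \equiv 0$, contradicting $\normab{u}=1$. After shrinking $\OO$, I may assume it lies inside a connected piece $\Omega_1$ of the underlying polyhedral partition on which $\AAA$ is $\PP_1$ and $\BB$ is a positive constant. On $\Omega_1$ the operator $v\mapsto -\nabla\cdot(\AAA\nabla v)-\lambda\BB v$ has polynomial (hence real-analytic) coefficients, so classical interior regularity for elliptic operators with real-analytic coefficients gives that $u|_{\Omega_1}$ is real-analytic. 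Since $u$ coincides with the polynomial $p$ on the open subset $\OO$, the identity principle upgrades this to $u = p$ on all of $\Omega_1$.

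The next step is a direct degree count inside $\Omega_1$. With $\AAA$ entrywise of degree $\le 1$, the vector $\AAA\nabla p$ has entries of degree at most $\deg p$, so $\nabla\cdot(\AAA\nabla p)$ has degree at most $\deg p - 1$; on the other hand, $\lambda\BB p$ has degree exactly $\deg p$ whenever $p\not\equiv 0$, because $\lambda>0$ and $\BB>0$. Matching the top-degree homogeneous part in the identity $-\nabla\cdot(\AAA\nabla p) = \lambda\BB p$ therefore forces $p\equiv 0$, and hence $u\equiv 0$ on $\Omega_1$.

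The final stage is to propagate $u\equiv 0$ from $\Omega_1$ across the partition to all of $\Omega$. Because $\AAA$ is globally continuous and $u\in H^1_0(\Omega)$ satisfies the PDE weakly, the transmission condition $[\AAA\nabla u\cdot n]=0$ across any interface $\Gamma$, combined with continuity of $u$ (which matches the tangential components of $\nabla u$) and with positive-definiteness of $\AAA$ (applied to the purely normal discrepancy vector), forces $\nabla u$ to match across $\Gamma$; thus $u$ is $C^1$ across each interface. On any piece $\Omega_2$ adjacent to $\Omega_1$ across an interface $\Gamma$, the restriction $u|_{\Omega_2}$ is then real-analytic, solves the elliptic PDE with analytic coefficients, and has vanishing Cauchy data $(u,\partial_n u)=(0,0)$ on the analytic hypersurface $\Gamma$. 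Solving the PDE for $\partial_n^2 u$ (possible because $\AAA n\cdot n > 0$) and inducting on the order of normal derivatives shows that all partial derivatives of $u$ vanish on $\Gamma$, so by real-analyticity $u\equiv 0$ on $\Omega_2$. Iterating along a chain of adjacent pieces covering the connected set $\Omega$ yields $u\equiv 0$ everywhere, the desired contradiction.

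I expect the main obstacle to be this last stage: the global coefficients are only piecewise smooth, so one cannot invoke a single strong unique-continuation result on $\Omega$ directly. Instead the vanishing must be propagated piece by piece, and this relies crucially both on the \emph{global} continuity of $\AAA$ (which upgrades the weak transmission condition to genuine $C^1$ matching of $u$, so that the Cauchy data is truly zero on each interface) and on the $\PP_1$ structure (which yields real-analyticity inside each piece and thus the Holmgren-type inductive vanishing of all derivatives at each interface).
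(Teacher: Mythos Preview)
Your degree-count contradiction on the small open set is exactly what the paper does: from $-\nabla\cdot(\AAA\nabla u)=\lambda\BB u$ with $\AAA\in\PP_1$, $\BB$ a positive constant and $u\in\PP_\ell$ on $\OO$, one bootstraps $u|_\OO\in\PP_{\ell-1}$, then iterates down to $u|_\OO\equiv 0$. Your detour through analytic regularity to extend $u=p$ from $\OO$ to the whole piece $\Omega_1$ is not needed for this step, though it does no harm.

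The genuine divergence is in the unique-continuation stage. You say that ``the global coefficients are only piecewise smooth, so one cannot invoke a single strong unique-continuation result on $\Omega$ directly'', and then build a piece-by-piece propagation via transmission conditions, $C^1$ matching, and a Holmgren-type argument on analytic interfaces. This works, but it is heavier than necessary and leans on several ingredients (boundary regularity of $u$ up to each interface from both sides, analyticity of the interfaces) that you would have to justify carefully. The paper's observation is much shorter: a function that is \emph{continuous} and piecewise $\PP_1$ is globally Lipschitz, so the leading coefficients $\AAA$ are Lipschitz on all of $\Omega$. One can then invoke directly a strong unique-continuation theorem for second-order elliptic operators with Lipschitz principal part (the paper cites Han) to conclude that $u$ cannot vanish on an open subset without vanishing identically. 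This single citation replaces your entire last stage.
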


\begin{proof}
We will argue by contradiction. Let us suppose that there exists an eigenfunction $u$ of~(\ref{E:cont-problem}) with corresponding eigenvalue $\lambda$, and a nonempty open subset $\OO$ of $\Omega$ such that $u|_\OO \in \PP_\ell(\OO)$. Without loss of generality, we may assume that $\AAA|_\OO \in \PP_1(\OO)$ and $\BB$ is constant over $\OO$. Then
$$-\nabla \cdot (\AAA\nabla u)=\lambda \BB u, \qquad \textrm{in}~\OO.$$
Since $u|_\OO\in\PP_\ell(\OO)$, we have that $-\nabla \cdot (\AAA\nabla u)\in \PP_{\ell-1}(\OO)$, and the last equation implies that $u|_\OO\in \PP_{\ell-1}(\OO)$. Repeating this argument we finally obtain that 
$$u|_\OO\equiv 0,$$
which cannot be true. In fact, $u$ is a solution of a linear elliptic equation of second order with uniformly elliptic and Lipschitz leading coefficients and therefore, it cannot vanish in an open subset of $\Omega$ unless it vanishes over $\Omega$~\cite{Han}.
\end{proof}

\begin{remark}
Searching for other sufficient conditions on the coefficients to guarantee Assumption~\ref{A:non-deg} is out of the scope of this article. We believe that in the assumptions of the previous lemma, $\AAA$ can be allowed to be piecewise continuous with discontinuities along Lipschitz interfaces. The only thing needed is a proof of the fact that solutions to elliptic problems with coefficients like these cannot vanish in an open subset of $\Omega$ unless they vanish over all $\Omega$. We conjecture that this could be proved using Han's result~\cite{Han} in combination with Hopf's lemma~\cite{Gilbarg-Trudinger}, but it will be subject of future work.
\end{remark}

We now proceed to prove Theorem~\ref{T:main-result}, which will be a consequence of the following lemma.

\begin{lemma}\label{L:nondeg->h0}
Let $\{ h_k \}_{k\in\NN_0}$ denote the sequence of meshsize functions obtained through the adaptive loop stated in Section~\ref{S:adloop}. If the continuous problem~(\ref{E:cont-problem}) satisfies the \emph{Non-Degeneracy Assumption}~\ref{A:non-deg}, then $\| h_k \|_{L^\infty(\Omega)} \to 0$ as $k \to \infty$.
\end{lemma}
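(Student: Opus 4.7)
The plan is to use Lemma~\ref{L:h0<=>omega+} to reduce the claim to showing that the set $\Omega^+$ of elements that are never refined is empty, and then to obtain a contradiction with the Non-Degeneracy Assumption by passing to the limit in a single non-refined element.

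More precisely, by Lemma~\ref{L:h0<=>omega+} it suffices to prove $\Omega^+ = \emptyset$. I would argue by contradiction: assume there exists $T \in \Tau^+$, so that $T \in \Tau_k$ for all $k \geq k_0$ for some $k_0 \in \NN_0$. Since $u_k \in \VV_k = \VV_{\Tau_k}$ and $T$ persists in every $\Tau_k$ for $k \geq k_0$, the restriction $u_k|_T$ lies in $\PP_\ell(T)$ for all $k \geq k_0$.

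Next I would invoke Theorem~\ref{T:limiting function} to extract a subsequence $\{u_{k_m}\}_{m\in\NN_0}$ converging in $H^1_0(\Omega)$ to a function $u_\infty \in \VV_\infty$, and Theorem~\ref{T:eigenfunction} to guarantee that $(\lambda_\infty, u_\infty)$ is an eigenpair of the continuous problem~\eqref{E:cont-problem}. Because $\PP_\ell(T)$ is a finite-dimensional, hence closed, subspace of $H^1(T)$, and $u_{k_m}|_T \in \PP_\ell(T)$ for all $k_m \geq k_0$, the $H^1(T)$-limit satisfies $u_\infty|_T \in \PP_\ell(T)$.

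Taking $\OO$ to be the (non-empty) interior of $T$, which is an open subset of $\Omega$, we have an eigenfunction $u_\infty$ of~\eqref{E:cont-problem} that coincides with a polynomial of degree at most $\ell$ on $\OO$. This directly contradicts Assumption~\ref{A:non-deg}, so $\Omega^+$ must be empty and the conclusion follows. There is no serious obstacle in this argument: the NDA is strong enough that no bootstrapping of the PDE on $T$ (as would otherwise be needed if we only knew a weaker form of non-degeneracy) is required; the only subtlety is checking that the $\PP_\ell$ property is preserved under the $H^1$-limit on the fixed element $T$, which is immediate from the finite dimensionality of $\PP_\ell(T)$.
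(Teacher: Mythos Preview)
Your proof is correct and follows essentially the same route as the paper: reduce via Lemma~\ref{L:h0<=>omega+} to $\Omega^+=\emptyset$, pick $T\in\Tau^+$, use convergence of the subsequence $u_{k_m}\to u_\infty$ together with finite-dimensionality of $\PP_\ell(T)$ to get $u_\infty|_T\in\PP_\ell(T)$, and contradict Assumption~\ref{A:non-deg} via Theorem~\ref{T:eigenfunction}. The only cosmetic difference is that the paper phrases the limit in $L^2(T)$ rather than $H^1(T)$, which is immaterial.
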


\begin{proof}
We argue by contradiction. By Lemma~\ref{L:h0<=>omega+}, if $\| h_k \|_{L^\infty(\Omega)}$ does not tend to zero, then $\Omega^+$ is not empty, and then there exists $T\in\Tau^+$, and thus $k_0\in\NN_0$ such that $T\in\Tau_k$, for all $k\geq k_0$. Since $\|u_{k_m}-u_\infty\|_{L^2(T)}\rightarrow 0$ as $m\rightarrow \infty$, and $u_k|_T \in\PP_\ell(T)$, for all $k\geq 0$, using that $\PP_\ell(T)$ is a finite dimensional space we conclude that
\begin{equation}\label{E:u_infty is a polynomial}
u_\infty|_T\in\PP_\ell(T).
\end{equation}
Theorem~\ref{T:eigenfunction} claims that $u_\infty$ is an eigenfunction of~(\ref{E:cont-problem}) and thus~(\ref{E:u_infty is a polynomial}) contradicts Assumption~\ref{A:non-deg}.
\end{proof}

\begin{remark}
 It is important to notice that the convergence of $h_k$ to zero is not an assumption, but a consequence of the fact that a subsequence is converging to an eigenfunction $u_\infty$ and the Non-Degeneracy Assumption~\ref{A:non-deg}.
\end{remark}

\begin{proof}[Proof of Theorem~\ref{T:main-result}]
In view of Theorem~\ref{T:cuasi main result} it remains to prove that $\lambda_k$ converges to the $j$-th eigenvalue of~(\ref{E:cont-problem}). By Lemma~\ref{L:nondeg->h0} the result follows from~(\ref{E:lambdak converges}).
\end{proof}

We conclude the article with several remarks.

\begin{remark}
At first sight, the convergence of $\|h_k\|_{L^\infty(\Omega)}$ to zero looks like a very strong statement, especially in the context of adaptivity. But the uniform convergence of the meshsize to zero should not be confused with quasi-uniformity of the sequence of triangulations $\{\Tau_k\}_{k\in\NN_0}$, the latter is not necessary for the former to hold. Thinking about this more carefully, we realize that if we wish to have (optimal) convergence of finite element functions to some given function in $H^1(\Omega)$, then $h_k$ \emph{must tend to zero everywhere} (pointwise) unless the objective function is itself a polynomial of degree $\le \ell$ in an open region of $\Omega$. Lemma~\ref{L:hktendstozero} implies that the convergence of $h_k$ to zero is also uniform, and this does not necessarily destroy optimality~\cite{CKNS-quasi-opt,Stevenson,Garau-Morin-Zuppa-quasi-eig}.
\end{remark}
 
\begin{remark}
A sufficient condition to guarantee that we converge to the desired eigenvalue is to assume that $h_k \to 0$ as $k\to\infty$. This condition is weaker than the Non-Degeneracy Assumption, but it is in general impossible to prove a priori.
\end{remark}

\begin{remark}
Another option to guarantee convergence to the desired eigenvalue is to start with a mesh which is sufficiently fine. In view of the minimum-maximum principles, it is sufficient to start with a triangulation $\Tau_0$ that is sufficiently fine to guarantee that $\lambda_{j,\Tau_0} < \lambda_{j_0}$, where $j_0 > j$ is the minimum index such that $\lambda_{j_0} > \lambda_{j}$. This condition is verifiable a posteriori if we have a method to compute eigenvalues approximating from below. Some ideas in this direction are presented in~\cite{Armentano-Duran}, where the effect of mass lumping on the computation of discrete eigenvalues is studied.
\end{remark}

\bibliographystyle{amsalpha}
\bibliography{bibliography}

{\setlength{\parindent}{0pt}
\small

\medskip
\rule{.5\textwidth}{1.5pt}

\medskip
\textbf{Affiliations}
\begin{description}
\item[Eduardo Garau:] Consejo Nacional de Investigaciones Cient\'{\i}ficas y T\'{e}cnicas and Universidad Nacional del Litoral, Argentina.\\
e-mail: \url{egarau@santafe-conicet.gov.ar}\\
Address: IMAL, G\"uemes 3450, S3000GLN Santa Fe, Argentina.

\item[Pedro Morin:] Consejo Nacional de Investigaciones Cient\'{\i}ficas y T\'{e}cnicas and Universidad Nacional del Litoral, Argentina.\\
e-mail: \url{pmorin@santafe-conicet.gov.ar}\\
Address: IMAL, G\"uemes 3450, S3000GLN Santa Fe, Argentina.

\item[Carlos Zuppa:] Universidad Nacional de San Luis, Argentina.\\
e-mail: \url{zuppa@unsl.edu.ar}\\
Address: Departamento de Matem\'atica, Facultad de Ciencias F\'\i sico, Matem\'aticas y Naturales, Universidad Nacional de San Luis, Chacabuco 918, 5700 San Luis, Argentina.
\end{description}
}

\end{document}